\documentclass[12pt]{amsart}
\usepackage{pgf,tikz,pgfplots} %For TikZ
\pgfplotsset{compat=1.14}
\usepackage{mathrsfs}
\usetikzlibrary{arrows}
\usetikzlibrary{patterns}
\usepackage{pgfplots}
\usetikzlibrary{intersections, pgfplots.fillbetween}
\usepackage[colorlinks=true,urlcolor=blue, citecolor=red,linkcolor=blue,linktocpage,pdfpagelabels, bookmarksnumbered,bookmarksopen]{hyperref}
\usepackage[hyperpageref]{backref}
\usepackage{cleveref}
\usepackage{xcolor}
\usepackage{amsthm} %for citing inside theorem header
\usepackage{latexsym,amsmath,amssymb}
\usepackage{accents}
\usepackage[colorinlistoftodos,prependcaption,textsize=tiny]{todonotes}
\usepackage{a4wide}
\usepackage{soul}
\usepackage{mathtools} %For \chi with a much lower index (\mychi)
\usepackage{xparse} %For a new definiton of \chi with a slightly lower index

\usepackage{enumerate}

\pgfdeclarelayer{ft}
\pgfdeclarelayer{bg}
\pgfsetlayers{bg,main,ft}

\title[Failure of Calder\'on-Zygmund estimates for $A_p$-weights when $p > 2$]{Failure of Calder\'on-Zygmund estimates for degenerate elliptic PDEs with $A_p$-weights when $p > 2$}

\author[Armin Schikorra]{Armin Schikorra}
\address[Armin Schikorra]{Department of Mathematics,
 University of Pittsburgh,
 301 Thackeray Hall,
 Pittsburgh, PA 15260, USA \and Department of Mathematics, 
Chulalongkorn University, 
Bangkok,
Thailand}
  \email{armin@pitt.edu}
\author{Martin Ulmer}
\address[Martin Ulmer]{Department of Mathematics, Brown University, 151 Thayer Street, Providence, RI 02912, USA}
\email{martin\_ulmer@brown.edu}

\definecolor{indigo}{rgb}{0.29, 0.0, 0.51}
\definecolor{p1}{gray}{0.4}
\definecolor{p2}{gray}{0.6}
\definecolor{p3}{gray}{0.98}
\definecolor{p4}{gray}{0.8}
\definecolor{p5}{gray}{0.9}

plus 6pt minus 12pt
\def\eps{\varepsilon}

\def\B{\mathbb{B}}

\def\N{{\mathbb N}}

\renewcommand{\div}{{\rm div}}

\newtheorem{theorem}{Theorem}
\newtheorem{lemma}[theorem]{Lemma}
\newtheorem{corollary}[theorem]{Corollary}
\newtheorem{proposition}[theorem]{Proposition}

\newtheorem{remark}[theorem]{Remark}

\def\diam{{\rm diam\,}}
\def\dist{{\rm dist\,}}

\newcommand{\R}{\mathbb{R}}

\newcommand{\brac}[1]{\left (#1 \right )}

\newcommand{\barint}{
\rule[.036in]{.12in}{.009in}\kern-.16in \displaystyle\int }

\newcommand{\barcal}{\mbox{$ \rule[.036in]{.11in}{.007in}\kern-.128in\int $}}

\def\mvint_#1{\mathchoice
          {\mathop{\vrule width 6pt height 3 pt depth -2.5pt
                  \kern -8pt \intop}\nolimits_{\kern -3pt #1}}%
          {\mathop{\vrule width 5pt height 3 pt depth -2.6pt
                  \kern -6pt \intop}\nolimits_{#1}}%
          {\mathop{\vrule width 5pt height 3 pt depth -2.6pt
                  \kern -6pt \intop}\nolimits_{#1}}%
          {\mathop{\vrule width 5pt height 3 pt depth -2.6pt
                  \kern -6pt \intop}\nolimits_{#1}}}

\numberwithin{theorem}{section} \numberwithin{equation}{section}

\newcommand{\aleq}{\lesssim}
\newcommand{\ageq}{\gtrsim}
\newcommand{\aeq}{\approx}

\let\latexchi\chi
\makeatletter
\renewcommand\chi{\@ifnextchar_\sub@chi\latexchi}
\newcommand{\sub@chi}[2]{% #1 is _, #2 is the subscript
  \@ifnextchar^{\subsup@chi{#2}}{\latexchi_{#2}}%
}
\newcommand{\subsup@chi}[3]{% #1 is the subscript, #2 is ^, #3 is the superscript
  \latexchi_{#1}^{#3}%
}
\makeatother
\makeatletter
\def\tikz@arc@opt[#1]{% over-write!
  {%
    \tikzset{every arc/.try,#1}%
    \pgfkeysgetvalue{/tikz/start angle}\tikz@s
    \pgfkeysgetvalue{/tikz/end angle}\tikz@e
    \pgfkeysgetvalue{/tikz/delta angle}\tikz@d
    \ifx\tikz@s\pgfutil@empty%
      \pgfmathsetmacro\tikz@s{\tikz@e-\tikz@d}
    \else
      \ifx\tikz@e\pgfutil@empty%
        \pgfmathsetmacro\tikz@e{\tikz@s+\tikz@d}
      \fi%
    \fi
    \tikz@arc@moveto
    \xdef\pgf@marshal{\noexpand%
    \tikz@do@arc{\tikz@s}{\tikz@e}
      {\pgfkeysvalueof{/tikz/x radius}}
      {\pgfkeysvalueof{/tikz/y radius}}}%
  }% 
  \pgf@marshal%
  \tikz@arcfinal%
}
\let\tikz@arc@moveto\relax
\def\tikz@arc@movetolineto#1{%
  \def\tikz@arc@moveto{\tikz@@@parse@polar{\tikz@arc@@movetolineto#1}(\tikz@s:\pgfkeysvalueof{/tikz/x radius} and \pgfkeysvalueof{/tikz/y radius})}}
\def\tikz@arc@@movetolineto#1#2{#1{\pgfpointadd{#2}{\tikz@last@position@saved}}}
\tikzset{%
  move to start/.code=\tikz@arc@movetolineto\pgfpathmoveto,%
  line to start/.code=\tikz@arc@movetolineto\pgfpathlineto}
\makeatother
\begin{document}
\begin{abstract}
We use convex integration techniques to provide examples of failure of weighted Calder\'{o}n-Zygmund estimates for degenerate linear elliptic PDEs when the weights are in $A_p$, $p > 2$. 
\end{abstract}

\maketitle
\tableofcontents

\section{Introduction}

We consider solutions $u$ to linear degenerate elliptic PDEs of the form
\[
\div(w\nabla u)=\div(wF)
\]
with a Muckenhoupt weight $w$, and are interested in the classical Calder\'{o}n-Zygmund theory. The purpose of this work is to investigate whether Calderón–Zygmund-type estimates persist for degenerate elliptic equations beyond the natural class of $A_2$ weights, and we show that this is not the case. We work on the simple domain $\mathbb{B}^n$, the unit ball in dimension $n$. Let $A_r=A_r(\mu)$ be the Muckenhoupt class with respect to the Lebesgue measure, and we denote by $w:\mathbb{R}^n\to (0,\infty)$ a weight with characteristic 
\[
[w]_{A_r} := \sup_{Q} \brac{\mvint_{Q} w}\, \brac{\mvint_{Q} w^{-\frac{1}{r-1}}}^{r-1}
\]
where the supremum is taken over cubes $Q \subset \R^n$.

The following is the standard Gehring-Lemma/Meyers version of Calderon-Zygmund theory for weighted elliptic PDEs
\begin{theorem}\label{th:CZpositive}
For any $C_2>0$ there exists $p_0>2$ such that the following is true.

Assume $w\in A_2$ with $[w]_{A_2}\leq C_2$. Then any, say, Lipschitz
$u\in W^{1,\infty}(\B^n)$ solving
\[
 \div(w\nabla u)=\div(wF) \qquad\textrm{in $\B^n$},\\
\]
satisfies
\[
        \int_{\frac12\B^n} w^s |\nabla u|^p
        \leq
        C(C_2,p,s) \brac{
        \int_{\B^n} w^s |F|^p + \|u\|_{L^\infty(\B^n)}^{p} \int_{\B^n} w^s }
\]
for every $p\in[2,p_0]$ and every
\[
        s \in \left \{1,\frac{p}{2} \right \}.
\]
\end{theorem}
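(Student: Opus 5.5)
We prove the two cases $s=\frac p2$ and $s=1$ separately. In both, the plan is a weighted Caccioppoli inequality, then a weighted Sobolev–Poincaré inequality, then Gehring's lemma in the doubling measure $d\mu = w\,dx$.

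\emph{Reverse Hölder step.} Testing the equation with $\eta^2(u-\bar u_{B})$, where $\eta$ is a cutoff on a ball $2B\subset\B^n$ and $\bar u_B$ is the $w$-average of $u$ on $2B$, gives the Caccioppoli inequality
\[
\mvint_{B} |\nabla u|^2\, d\mu \aleq r^{-2}\mvint_{2B}|u-\bar u_B|^2 d\mu + \mvint_{2B}|F|^2 d\mu .
\]
For $w\in A_2$, the Fabes–Kenig–Serapioni weighted Sobolev–Poincaré inequality holds with some gain exponent. It gives
\[
\brac{\mvint_{2B}|u-\bar u_B|^2 d\mu}^{1/2} \aleq r\brac{\mvint_{2B}|\nabla u|^{2-\delta}d\mu}^{1/(2-\delta)},
\]
with constants depending only on $C_2$ and $n$. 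Combining the two inequalities gives a reverse Hölder inequality for $g=|\nabla u|^{2-\delta}$ with respect to $\mu$, with an added term $|F|^{2-\delta}$. The measure $\mu$ is doubling with constant controlled by $[w]_{A_2}$. Gehring's lemma on the space of homogeneous type $(\B^n,|\cdot|,\mu)$ then yields $p_0>2$, depending only on $C_2$, such that for $p\le p_0$
\[
\brac{\mvint_{\frac12\B^n}|\nabla u|^p d\mu}^{1/p}\aleq \brac{\mvint_{\B^n}|\nabla u|^2d\mu}^{1/2}+\brac{\mvint_{\B^n}|F|^p d\mu}^{1/p}.
\]
The $L^2(\mu)$ energy term is handled by one more Caccioppoli on the whole ball (a slightly smaller ball, then covering): it is bounded by $\|u\|_{L^\infty}^2\mu(\B^n)+\int|F|^2d\mu$. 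Hölder in $\mu$ turns this into the stated form. This proves the case $s=1$.

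\emph{The case $s=\frac p2$.} We rewrite the equation without the weight in the principal part: $\div(w\nabla u - wF)=0$. Set $V=w^{1/2}\nabla u$, so that $w^{p/2}|\nabla u|^p=|V|^p$. We use a perturbation (Meyers-type) argument in unweighted $L^p$ for the quantity $w^{1/2}\nabla u$. The Caccioppoli inequality above already controls $\int_B |V|^2\,dx$ by $r^{-2}\int_{2B} w|u-\bar u|^2+\int_{2B} w|F|^2$. The difficulty is the Poincaré term: we need
\[
\brac{\mvint_{2B}w|u-c|^2}^{1/2}\aleq r\brac{\mvint_{2B}|V|^{q}dx}^{1/q}\quad\text{for some } q<2,
\]
which is a two-weight Poincaré inequality with weights $w$ and $w^{q/2}$ under $w\in A_2$. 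We derive it by writing $|u-c|\aleq I_1(\chi_{2B}|\nabla u|)$ and applying a two-weight fractional integral bound (Sawyer/Pérez type). The bump condition required there follows from the reverse Hölder self-improvement of $A_2$ weights (both $w$ and $w^{-1}$ satisfy reverse Hölder), and the resulting constants depend only on $C_2$. With this, $|V|^q$ satisfies an unweighted reverse Hölder inequality, with the added term $w^{q/2}|F|^q$. The unweighted Gehring lemma then gives the $L^p$ bound, and the lower-order term is treated exactly as in the first case, now with $\int w^{p/2}$.

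The main obstacle is this two-weight Poincaré inequality in the case $s=p/2$. If the two-weight fractional integral route proves too delicate, the fallback is to interpolate. By Hölder, $w^{p/2}|\nabla u|^p=(w|\nabla u|^2)^{p/2}$, so this case is genuinely about higher integrability of $w|\nabla u|^2$ in the Lebesgue measure. That can also be obtained by running the first argument on $V$, controlling $w|u-c|^2$ through $A_2$-reverse Hölder together with the $\mu$-Poincaré inequality. Choosing $p_0$ as the minimum of the exponents from the two cases finishes the proof.
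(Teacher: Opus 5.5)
Your proposal is correct and follows the paper's proof in the appendix. For $s=1$ it uses Caccioppoli, the Fabes--Kenig--Serapioni Poincar\'e inequality and Gehring's lemma in $w\,dx$; for $s=\frac p2$ it uses Caccioppoli, a two-weight Poincar\'e inequality and the unweighted Gehring lemma for $w^{1/2}\nabla u$; both cases end with a Caccioppoli and H\"older/Jensen step on the energy term.

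The two-weight Poincar\'e inequality you flag as the main obstacle is exactly the second estimate of the paper's weighted Poincar\'e lemma, which the paper simply cites from \cite[Proposition~3]{BDGP22}. Your fallback (FKS Poincar\'e with some exponent $\tau<2$, then H\"older and the reverse H\"older inequality for $w$, taking $q$ close to $2$) does prove it, so your argument is at least as self-contained as the paper's.
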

Versions of \Cref{th:CZpositive} are well established in the literature for nondegenerate elliptic PDEs, i.e. with weights that are bounded below and above. See for example \cite{Meyer63,DiF96,BW04}, references within and citations thereof. But also in the degenerate case, versions of \Cref{th:CZpositive} are well established, for example in \cite{CUMR18,CMP18,BDGP22}, references within and citations thereof. Furthermore, versions of \Cref{th:CZpositive} can also be found for nonlinear degenerate and nondegenerate elliptic PDEs and systems, see \cite{BWZ07,diFFZ14,CMT19,BDGP22}. We illustrate the proof of \Cref{th:CZpositive} in \Cref{s:Czpositiveproof} for convenience of the reader.

Here, we are more interested in illustrating \emph{how} the estimate in \Cref{th:CZpositive} depends on the weight $w$. In particular, the gain of integrability $p_0>2$ depends on the $[w]_{A_2}$-quantity, and indeed, this dependence is essential. The following is an easy consequence of \cite{AFS08} when combined with the observations in \cite{Armin24,Vincent25} that shows that the higher integrability exponent $p_0$ deteriorates as the size of $[w]_{A_1}$ increases.
\begin{proposition}
For $n=2$ any $p > 2$ and any $\Gamma > 1$ there exists $w \in A_{1}$ with 
\[
 [w]_{A_1} \leq C\, \frac{2}{p-2}
\]
and $u \in W^{1,\infty}_0(\mathbb{B}^n)$ such that $\int_{\mathbb{B}^2} w|\nabla u|^2 < \infty$ and
\[
\begin{cases}
 \div(w \nabla u) = \div(w F) \qquad&\textrm{in $\mathbb{B}^2$}\\
 u = 0 \quad &\text{on $\partial \B^2$}
 \end{cases}
\]
but for any $s >0$
\[
 \int_{\frac{1}{2} \B^n} w^{s} |\nabla u|^p > \Gamma \brac{\int_{\B^n} w^{s} |F|^p  + \|u\|_{L^\infty}^p \int_{\B^n} w^s}
\]
\end{proposition}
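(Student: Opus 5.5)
We would derive the statement from the sharp counterexample to higher integrability of Beltrami solutions in \cite{AFS08} (Astala--Faraco--Sz\'ekelyhidi), and then convert that Beltrami equation into a degenerate divergence-form equation with an $A_1$ weight, following \cite{Armin24,Vincent25}.

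\emph{Step 1: the Beltrami counterexample.} For $K>1$, \cite{AFS08} uses convex integration to build a Lipschitz map $f\colon\B^2\to\R^2$ solving $\bar\partial f=\mu\,\partial f$ with $|\mu|\le k=\frac{K-1}{K+1}$. Moreover, $f$ equals an affine (or zero) map near $\partial\B^2$, and $Df\notin L^{p}$ for $p$ at the critical exponent $\frac{2K}{K-1}$. More precisely, for every $\Gamma$ one can make $\int_{\frac12\B^2}|Df|^p$ exceed $\Gamma$ times the data norm. We would also arrange, as in \cite{AFS08}, that $\mu$ takes values in a set of the form $\{\pm k\}$. Then the equation is \emph{isotropic*} in the sense of \cite{Armin24}: it is equivalent to a scalar equation $\div(\sigma\nabla u)=0$ with scalar $\sigma$ taking values in $\{K,1/K\}$.

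\emph{Step 2: the weight.} Writing $u=\mathrm{Re}\, f$, one gets $\div(\sigma\nabla u)=0$ with $\sigma\in\{K,K^{-1}\}$ measurable. Set $w:=\sigma$. A weight bounded above by $K$ and below by $K^{-1}$ satisfies $[w]_{A_1}\le K^2$. The relation $p=\frac{2K}{K-1}$ gives $K=\frac{p}{p-2}\aeq\frac{2}{p-2}$ as $p\downarrow2$. To get the linear dependence $[w]_{A_1}\le C\frac{2}{p-2}$ rather than $K^2$, we would normalise $\sigma$ so that it takes the values $\{1,K\}$ (rescaling does not change the equation), which gives $[w]_{A_1}\le K$.

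Next, the boundary condition $u=0$ and the right-hand side $F$. Since $f$ is affine near $\partial\B^2$, say $u=\ell$ there, we put $v:=u-\ell\eta$ with a cutoff $\eta$, or simply choose $F$ so that $\div(w\nabla u)=\div(wF)$ holds with $F$ supported near the boundary, $F=\nabla\ell$ times a bounded factor. Then $\int w^s|F|^p$ is controlled by a constant depending on $K$ only. The term $\|u\|_{L^\infty}^p\int w^s$ is also bounded, since $u$ is Lipschitz with a controlled constant and $w\le K$. Meanwhile $w\ge1$ gives $\int_{\frac12\B^2}w^s|\nabla u|^p\ge\int_{\frac12\B^2}|\nabla u|^p$. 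Choosing the construction with $\int_{\frac12\B^2}|\nabla u|^p$ arbitrarily large relative to these fixed quantities then yields the inequality for every $s>0$ simultaneously. Finiteness of $\int w|\nabla u|^2$ is automatic from the Lipschitz property.

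\emph{Main obstacle.} The delicate point is the quantitative blow-up. \cite{AFS08} gives exact failure of $Df\in L^p_{\mathrm{loc}}$ for an infinite construction, but here we need Lipschitz $u$ with $\int_{\frac12\B^2}|\nabla u|^p$ large while $\|u\|_\infty$ and the boundary data stay bounded. We would first try truncating the convex integration staircase after finitely many steps: each stage keeps $f$ Lipschitz and bounded, while $\int|Df|^p$ grows without bound at the critical exponent. We would also need to check that the exponent threshold matches $p$ for the given $K$, possibly adjusting $K$ slightly upward, which only changes the constant $C$.
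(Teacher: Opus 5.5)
The paper gives no proof beyond calling the statement an easy consequence of \cite{AFS08} combined with \cite{Armin24,Vincent25}. Your overall route is therefore the intended one: take isotropic two-phase coefficients from \cite{AFS08} and read them as $\div(w\nabla u)=\div(wF)$ with $w=\sigma$. Two steps of your execution, however, fail.

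\textbf{The $A_1$ normalisation.} Multiplying $\sigma$ by a constant turns $\{K,K^{-1}\}$ into $\{1,K^2\}$, never into $\{1,K\}$, and $[w]_{A_1}$ is invariant under $w\mapsto cw$. In a piecewise affine construction the two level sets are open and adjacent, so $[w]_{A_1}$ is of order $K^2$.

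Suppose instead you genuinely take a coefficient with values in $\{1,K\}$. Then $w/\sqrt K$ has ellipticity $\sqrt K$, and the planar $L^p$ theory (Astala--Iwaniec--Saksman, \cite{AFS08}) gives $\|\nabla u\|_{L^p}\aleq\|F\|_{L^p}$ for all $p<\frac{2\sqrt K}{\sqrt K-1}$. This range contains $\frac{2K}{K-1}$. Since $1\le w\le K$, the weighted inequality then holds with a constant depending only on $K,p,s$, so there is no counterexample for large $\Gamma$.

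Hence this route only yields $[w]_{A_1}\aleq(\frac{p}{p-2})^2$, and your rescaling argument for the linear bound is wrong.

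\textbf{The blow-up mechanism.} The convex integration example of \cite{AFS08} lives at the \emph{lower} exponent. There $u=x_1$ on the boundary, $\nabla u\in L^{\frac{2K}{K+1},\infty}\setminus L^{\frac{2K}{K+1}}$, and $u\notin W^{1,2}$. It does not produce Lipschitz maps failing at $\frac{2K}{K-1}$, so you need a step transferring the failure to $p>2$.

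Duality does this. Suppose $\|\nabla\phi\|_{L^p}\le C\|G\|_{L^p}$ held for $W^{1,2}_0$-solutions of $\div(\sigma\nabla\phi)=\div G$. Let $\tilde u$ be the energy solution with the same boundary values as $u$, and set $z=u-\tilde u\in W^{1,p'}_0$. Then $\int G\cdot\nabla z=\int\sigma\nabla\phi\cdot\nabla z=0$ for all smooth $G$. This forces $z=0$, contradicting $u\notin W^{1,2}$. So the estimate fails for every $p>\frac{2K}{K-1}$; you would still have to approximate to reach a Lipschitz $u$.

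Your truncation substitute does not work as stated. After finitely many steps, the gradient on the unfinished region is not in the target set, so the equation fails there and $F$ must be supported there, not near $\partial\B^2$. In a staircase of the type used in \cite{AFS08}, the levels $t_j$ grow geometrically and $|\{|\nabla u|\aeq t_j\}|\aeq t_j^{-q}$ with $q=\frac{2K}{K+1}$. The unfinished region then has measure $\aeq t_N^{-q}$ and gradient $\aeq t_N$. Consequently $\int|F|^p\aeq t_N^{p-q}\aeq\sum_{j\le N}t_j^{p-q}\aeq\int|\nabla u|^p$ for every $p>2$, and the ratio stays bounded. The construction in \Cref{sec:Baby1D} avoids exactly this by placing the leftover set $\Omega_{B_N}$ at the small gradient $b_N\aeq\log N$.

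Relatedly, a ``controlled'' Lipschitz constant is incompatible with $\int_{\frac12\B^2}|\nabla u|^p$ being large. You have to control $\|u\|_{L^\infty}$ through fine oscillations instead.
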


Much of work on degenerate elliptic equations has focused on $A_2$ weights, since the foundational work \cite{FKS82} established standard tools for elliptic PDEs like Harnack inequality, Hölder continuity, and weighted Sobolev and Poincar\'{e} inequalities for $A_2$ weights. On the other hand, in harmonic analysis the Calderón–Zygmund theory for singular integral operators aligns naturally with the full scale of Muckenhoupt weights: a classical theorem from \cite{M72} and \cite{CF74} asserts that Calderón–Zygmund operators are bounded on $L^r(w)$ if and only if $w\in A_r$. However, it is far less clear to what extent analogous results hold for solutions to degenerate elliptic equations.

Of course, since any $w \in A_r$ belongs to $A_{r-\eps}$ for some $\eps > 0$, it is natural to ask whether the higher integrability in \Cref{th:CZpositive} persists, possibly with a reduced exponent, for weights in $A_r,r>2$. In other words, we can ask whether for any $A_r$ weight, $r>2$, there is \emph{some} gain of integrability.

Our main theorem shows that this is indeed impossible. Notably, this fails already in one dimension. 
%Indeed, as we show here, for $A_r$, $r> 2$ Calderon-Zygmund theory fails already in $n=1$.
\begin{theorem}\label{th:Baby}
Let $n=1,2$. For any $r >2$ there exists constants $C_r >0$ with the following properties.

For any $\Gamma > 0$ and any $p > 2$ there exists $u,F: \mathbb{B}^n \subset \R^n \to \R$, $u$ Lipschitz and $F$ bounded, such that
\begin{itemize}
 % \item  $u \Big |_{\partial \mathbb{B}^n} = 0$, in particular $u \in W^{1,2}_0(\mathbb{B}^n)$ and $\int_{\mathbb{B}^n} w |\nabla u|^2 < \infty$; and
 \item there exists a piecewise constant weight $w: \mathbb{R}^n\to (0,\infty)$ with $[w]_{A_r} \leq C_r$ for any $r >2$;
 \item the PDE
 \[
  \div (w \nabla u) = \div (w F) \quad \text{in $\mathbb{B}^n$ holds in the distributional sense; but}
 \]
  \item For any $s \in [1,p-1)$ the estimate
 \begin{equation}\label{eq:wronginequality}
  \int_{\frac{1}{2}\mathbb{B}^n} w^{s}|\nabla u|^{p} > \Gamma \brac{\int_{\B^n} w^{s} |F|^p  + \|u\|_{L^\infty}^p \int_{\B^n} w^s}
 \end{equation}
 holds.
 \end{itemize}
 Actually, we can choose $u$ so that $u = 0$ on $\partial \B^n$ and $\|u\|_{L^\infty(\B^n)} \ll 1$ and $w \leq 1$ a.e..
\end{theorem}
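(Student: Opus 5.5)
The plan is to prove the case $n=1$ by an explicit construction and then get $n=2$ by taking tensor products (functions independent of $x_2$, cut off near $\partial\B^2$) or by laminating the one-dimensional construction in the spirit of convex integration. In one dimension the equation $(wu')'=(wF)'$ is equivalent to
\[
w(u'-F)\equiv c \qquad\text{on } \B^1=(-1,1),
\]
so $u'=F+c/w$. The idea is to let $w$ be tiny on a small set $E$. There $u'\approx c/w$ is huge while $F$ stays bounded, and we use $F$ only on $E^c$ to enforce $u(\pm1)=0$.

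\textbf{Basic block.} Take $w=\delta$ on $E\subset\frac12\B^1$ with $|E|=\eps$, and $w=1$ elsewhere. Set $c=\eta\delta$ and $F=-\eta\eps/|E^c|$ on $E^c$, so that $\int_{-1}^1u'=0$. Then:
\begin{itemize}
\item on $E$ we have $u'=\eta$, and $\int_{\frac12\B}w^s|u'|^p\gtrsim\eps\,\delta^s\eta^p$;
\item the right-hand side of \eqref{eq:wronginequality} is $\lesssim(\eta\eps)^p$, since $\|u\|_\infty\lesssim\eta\eps$, $|F|\lesssim\eta\eps$, $\int w^s\le 2$ and $w\le1$.
\end{itemize}
So \eqref{eq:wronginequality} holds once $\delta^s\ge\Gamma\eps^{p-1}$.

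The $A_r$ constraint behaves as follows. For an interval $I\supset E$,
\[
\Bigl(\mvint_I w\Bigr)\Bigl(\mvint_I w^{-\frac1{r-1}}\Bigr)^{r-1}\lesssim 1+\Bigl(\tfrac{\eps}{|I|}\Bigr)^{r-1}\delta^{-1},
\]
so we need $\eps\lesssim\delta^{1/(r-1)}$. Choosing $\eps$ as large as allowed, the block alone gives the failure only for $s<(p-1)/(r-1)$. I must be careful here: a single block with $\eps\le\delta$ would even be uniformly $A_2$. That would contradict \Cref{th:CZpositive}, so the final construction must have $[w]_{A_2}$ genuinely blowing up.

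\textbf{Multiscale iteration.} To reach all $s\in[1,p-1)$ with one weight that is uniformly $A_r$ for every $r>2$, I would iterate the block at scales $\eps_k\downarrow0$. Inside each low-weight region I again place a smaller, lower region, in a self-similar Cantor-type fashion: level $k$ has value $\delta_k=\lambda^k$ on a set of relative density $\theta^k$. Constancy of $w(u'-F)$ forces $u'$ to be multiplied by $\lambda^{-1}$ at each level. Meanwhile $F$ is corrected only on the complementary, $w\approx$ constant, pieces at each level, so that $u$ has zero mean on every block. This keeps $\|u\|_\infty$ and $\int w^s|F|^p$ controlled by the first-level quantities.

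The relative density $\theta$ and ratio $\lambda$ are chosen so that the Muckenhoupt sums $\sum_k\theta^k\lambda^{-k/(r-1)}$ converge for every $r>2$ but diverge at $r=2$. This forces $\theta=\lambda$ (up to constants), giving $C_r$ depending on $r$. The left side then gains a factor of order $(\theta\lambda^{s-p})^k=\lambda^{(1+s-p)k}$ per level. This diverges exactly when $s<p-1$, so taking enough levels $K=K(\Gamma,p,s)$ beats $\Gamma$. The remaining claims follow by scaling: multiplying $u,F$ by a small constant gives $\|u\|_\infty\ll1$, and we have $w\le1$ and $u=0$ on $\partial\B^n$ by construction.

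\textbf{Main obstacle.} I expect the hardest step to be the $A_r$ verification for the nested weight over intervals straddling several levels. I would handle it by a dyadic/self-similarity argument: an interval meeting level $k$ but not contained in it sees at most a geometric series of lower levels. A secondary difficulty is the two-dimensional case. There, making $u$ vanish on $\partial\B^2$ while keeping the equation requires the laminate to be cut off, and the cutoff errors must be absorbed into $F$.
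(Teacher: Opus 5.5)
\paragraph{The compensation step fails.} Take a level-$k$ block and its complementary piece $P$, where $w=\lambda^k$. Once the sub-blocks are balanced, your requirement that ``$u$ has zero mean on every block'' forces $\int_P u'=0$. Since $u'=F+c\lambda^{-k}$ on $P$, the mean of $F$ over $P$ must be $-c\lambda^{-k}$.

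Jensen then gives $\int_P w^s|F|^p\ge |P|\lambda^{ks}(c\lambda^{-k})^p$. This is exactly the order of what $P$ contributes to the left-hand side; in fact $\int_P w^s|u'|^p\le 2^p\int_P w^s|F|^p$. At the bottom level, where $F=0$ and $u'=c\lambda^{-K}$, the parent pieces must absorb $c\lambda^{-K}|E_K|$. That costs at least a fixed multiple, depending only on $\theta,\lambda,p,s$, of the bottom-level gain.

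So $\int w^s|u'|^p\le C(\theta,\lambda,p,s)\int w^s|F|^p$ however many levels you take. Your claim that $\int w^s|F|^p$ stays controlled by first-level quantities is false. The one-block computation hides this because it drops the term $c/w=\eta\delta$ of $u'$ on $E^c$: with your $F$ one gets $\int u'=\eta\delta|E^c|\neq 0$. That term is harmless for one block, but in the iteration it is the entire cost.

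\paragraph{The missing idea.} You must never balance inside the low-weight region; this is what the paper does.
\begin{itemize}
\item It keeps $wu'\equiv-1$, i.e.\ $F=0$, on every $\Omega_{G_k}=\{w=2^{-k}\}$.
\item It lets $u$ drift in one direction through all nested levels. On each Whitney cube of $\Omega_{H^k}$, the affine piece of slope $h^k<0$ is replaced via \eqref{eq:akhkbbn} by a sawtooth with the two negative slopes $-2^k$ and $h^{k+1}$.
\item The total drift $\sum_k|\Omega_{G_k}|2^k=\sum_k\frac1{2k}$ is compensated only once, on $\Omega_{B_N}$ where $w=1$, with slope $b_N\sim\log N$.
\end{itemize}
Hence $\int w^s|F|^p\lesssim(\log N)^p$, while the left side is $\gtrsim\sum_{k\le N}\frac1k2^{k(p-s-1)}$. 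Smallness of $\|u\|_\infty$ comes from oscillating the top-level sawtooth. With this change your nested weight with $\theta=\lambda$ also works: the drift is then $\sim N$, still only polynomial against an exponential gain.

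\paragraph{Your one-block $A_r$ bound.} It is evaluated only at $|I|\sim1$. A short interval centred at an endpoint of a component of $E$ already gives a characteristic $\ge 2^{-r}\delta^{-1}$ for every $r$. So no single sharp block is uniformly $A_r$, let alone uniformly $A_2$. This, not the exponent count, resolves the contradiction with \Cref{th:CZpositive} that you noticed. It is also why the weight may jump only by a bounded factor across each interface, with deeper levels kept at Whitney distance, as the paper's nested Whitney cubes arrange.

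\paragraph{Your 2D reduction.} This is fine, and simpler than the paper's laminates with error sets. Weights independent of $x_2$ have the same $A_r$ constant on cubes. For $u=v(x_1)\chi(x_2)$, setting $wF:=w\nabla u-c(\chi(x_2),0)$ absorbs the cutoff, since $(c\chi(x_2),0)$ is divergence free.
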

Of course, for $p=2$ and $s=1$ \eqref{eq:wronginequality} cannot be true -- from Cacciopoli inequality we immediately get
\[
\int_{\frac{1}{2}\mathbb{B}^n} w|\nabla u|^{2} \leq C \brac{\int_{\B^n} w |F|^2  + \|u\|_{L^\infty}^2 \int_{\B^n} w}.
\]
Let us emphasize here, that the characteristics $[w]_{A_r}$ are bounded by a uniform constant $C_r$ independent of $p$ and $\Gamma$, whereas the corresponding $A_2-$characteristics cannot remain uniformly bounded without contradicting \Cref{th:CZpositive}. This demonstrates that weights $w\in A_r\setminus A_2$ exhibit a fundamentally different behavior: in general, they do not support Calderón–Zygmund-type higher integrability estimates. In this sense, the class $A_2$ is the largest Muckenhoupt class for which such higher integrability results like \Cref{th:CZpositive} can hold.

We provide the proof for Theorem \ref{th:Baby} in $n=1$ and $n=2$ using convex integration techniques. The core idea of the argument is independent of the dimension, and will be presented in the one-dimensional case. However, technical adaptations due to higher dimensions need to be done for all dimensions larger than $1$, and we demonstrate these in the proof for $n=2$. For $n \geq 3$, an adaptation as in \cite{MS24} should be possible and might be necessary. The main new ingredient is the way we combine convex integration corrugations with a control of the coefficient $w$ \emph{at every scale}, by combining elementary splittings with a Whitney cube construction -- which might be of independent interest.

\begin{remark}
It would be interesting to have an example for the homogeneous problem (in exchange for inhomogeneous boundary data), i.e. a weight $w$ and a solution $u$ to
\[
\begin{cases}
 \div(w \nabla u) = 0 \qquad&\textrm{in $\mathbb{B}^n$}\\
 u = x_1 \quad &\text{on $\partial \B^n$}.
 \end{cases}
\]
such that $\int_{\B^n} w |\nabla u|^2 < \infty$ but $\int_{\frac{1}{2}\B^n} w |\nabla u|^q = \infty$ for any $q > 2$.
% It is notable that the example of \Cref{th:Baby} exists also in one dimension, as opposed to the results of \cite{AFS08}.

Observe, however, for $n=1$ it is not clear what to do: If $w \in A_r$ and $u \in W^{1,1}_0((0,1))$ and in distributional sense 
 \[
  (w u')' = 0
 \]
then $w u' \equiv c$ a.e. for some $c \in \R$. 

Then $u'(x) = c w^{-1}(x)$ a.e.. Since $u' \in L^1$ we conclude that $w^{-1} \in L^1$, but then $u(0) = u(1) = 0$ implies $\int c w^{-1}(x) = 0$, i.e. either $c \equiv 0$ or $w \equiv +\infty$ (the latter impossible). So we find that $u' \equiv 0$ and thus $u = 0$ a.e.
\end{remark}

\subsection*{Notation} We use standard notation, constants generally depend on the dimension, other relevant dependencies are indicated. Constants $C$ and $c$ may change from line to line. We write $A \aleq B$ if there exists a constant so that $A \leq C B$, and $A \aeq B$ if $A \aleq B$ and $B \aleq A$.

\subsection*{Outline} In \Cref{sec:Baby1D} we present the proof of Theorem~\ref{th:Baby} in one dimension. In higher dimensions the argument is conceptionally the same, but we apply elementary splitting techniques from convex integration to ensure boundary data, see \Cref{sec:Baby2D}.

\subsection*{Acknowledgment}
We thank Simon Bortz, Hongjie Dong, David Cruz-Uribe, Jill Pipher, Bruno Poggi, Olli Saari for interesting discussions and comments. The algorithm for the construction has been developed with the help of chatgpt.

Armin Schikorra has received funding through the NSF Career DMS-2044898.

\section{Proof of Theorem~\ref{th:Baby} in one dimension}\label{sec:Baby1D}
To begin with, recall the following Whitney cube decomposition
\begin{lemma}\label{la:whitney}
Let $\Omega$ be any open set and assume that 
\[
 \Omega = \bigcup_{i \in \N} W_i \cup \mathcal{N}
\]
where $W_i$ are pairwise disjoint open cubes so that 
\[
 \diam(W) \leq \frac{1}{100} \dist(W, \Omega^c)
\]
and $\mathcal{N}$ is a zeroset.

Then for any cube $C$ with $C \cap \Omega^c \neq \emptyset$ we have 
\[
\sum_{i: C \cap W_i \neq \emptyset} |W_i| \leq \Lambda\, |C|
\]
for a uniform constant $\Lambda = \Lambda(n)$.
\end{lemma}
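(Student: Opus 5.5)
The plan is a comparison of sizes: every Whitney cube meeting $C$ is small relative to $C$, so all of them lie inside a fixed dilate of $C$. Because the $W_i$ are pairwise disjoint, their total measure is then at most the measure of that dilate.

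First, fix a point $x_0 \in C \cap \Omega^c$. Let $W_i$ be any cube with $C \cap W_i \neq \emptyset$, and pick $y \in C \cap W_i$. Since $x_0 \in \Omega^c$,
\[
 \dist(W_i,\Omega^c) \leq |y - x_0| \leq \diam C .
\]
The Whitney condition then gives
\[
 \diam W_i \leq \tfrac{1}{100}\, \dist(W_i,\Omega^c) \leq \tfrac{1}{100}\, \diam C .
\]
So each cube meeting $C$ is tiny compared to $C$. This is the only place where the hypothesis $C\cap\Omega^c\neq\emptyset$ enters. Without it, a small $C$ deep inside a huge $W_i$ would break the estimate.

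Second, every point $z \in W_i$ satisfies
\[
 \dist(z,C) \leq |z-y| \leq \diam W_i \leq \tfrac{1}{100}\diam C = \tfrac{\sqrt n}{100}\,\ell(C),
\]
where $\ell(C)$ is the side length. Hence $W_i \subset \tilde C$, where $\tilde C$ is the cube concentric with $C$ of side length $(1+\tfrac{2\sqrt n}{50})\ell(C)$; any fixed dilate such as $3\sqrt n\, C$ also works.

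Third, since the $W_i$ are pairwise disjoint and all contained in $\tilde C$,
\[
 \sum_{i:\, C\cap W_i \neq\emptyset} |W_i| = \Big|\bigcup_{i:\, C\cap W_i\neq\emptyset} W_i\Big| \leq |\tilde C| = \Big(1+\tfrac{\sqrt n}{25}\Big)^n |C| .
\]
This gives the claim with $\Lambda(n) = (1+\sqrt n/25)^n$. Countable additivity is all that is needed for the sum, and the null set $\mathcal N$ plays no role.

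There is no real obstacle here. The only point requiring care is the first step: bounding $\dist(W_i,\Omega^c)$ through the common point $y$ and the point $x_0\in\Omega^c$ inside $C$. Everything else is bookkeeping with diameters in terms of side lengths.
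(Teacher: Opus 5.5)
Your argument is correct and complete. The hypothesis $C\cap\Omega^c\neq\emptyset$ gives $\diam W_i\leq \frac{1}{100}\diam C$ for every $W_i$ meeting $C$. Hence all such cubes lie in a fixed dilate of $C$, and disjointness finishes the proof.

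The paper gives no proof of this lemma; it is only recalled as a standard fact about Whitney decompositions. Your dilate-and-disjointness argument is the standard one.

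One cosmetic point: the enlarged cube only needs side $(1+\frac{\sqrt n}{50})\ell(C)$ rather than $(1+\frac{2\sqrt n}{50})\ell(C)$. Your larger cube still contains every $W_i$, so validity is unaffected.
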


We first start with several definitions that we are going to need throughout the proof.
For $n \in \N$ set
\[
 b_n := \frac{ \sum_{k=1}^n \frac{1}{k}}{2- \sum_{k=1}^n \frac{1}{k2^k}}, 
\]
\[
 \mu_n = \frac{1}{2n 2^n}, \qquad\textrm{and}
\]
\[
 \beta_n := 1-\sum_{k=1}^n \mu_k=1-\sum_{k=1}^n\frac{1}{2k 2^k}.
\]

Now fix a suitably large $N$ (to be chosen later in dependency of $\Gamma$ and $p$), and set for $j \in \N$
\[
 \alpha_j := \sum_{k=j}^N \mu_k \geq \frac{1}{j}2^{-j-1},
\]
and
\[
 h^j := \brac{\alpha_j}^{-1} \sum_{k=j}^N \mu_k (-2^k) \equiv - \brac{\alpha_j}^{-1} \sum_{k=j}^N \frac{1}{2k}.
\]
In particular 
\[
 h^N = -2^N.
\]

We gather the basic obvious properties:
\begin{lemma}\label{lemma:basicProperties}
 
\begin{enumerate}
 \item For all $n \in \N$ we have $\sum_{k=1}^n \mu_k (-2^k) + \beta_n b_n = 0$ 
 \item $\alpha_1 + \beta_N=1$ and \begin{equation}\label{eq:a1h1bbn}
                                   \alpha_1 h^1 + \beta_N b_N = 0
                                  \end{equation}

 \item $\frac{\alpha_{k+1}}{\alpha_k}+\frac{\mu_{k}}{\alpha_k} =1$ and
 \begin{equation}\label{eq:akhkbbn} h^k = \alpha_k^{-1} \brac{\alpha_{k+1} h^{k+1}+ \mu_{k} (-2^{k})}\end{equation} for all $k=1,\ldots,N-1$
 
\end{enumerate}
\end{lemma}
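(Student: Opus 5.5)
All three items follow by direct computation from the definitions of $\mu_k$, $\beta_n$, $b_n$, $\alpha_j$ and $h^j$. The one simplification used throughout is
\[
\mu_k\,(-2^k) = -\frac{1}{2k},
\]
so every weighted sum of the values $-2^k$ collapses to a partial harmonic sum.

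For (1), sum this identity over $k=1,\dots,n$ to get $\sum_{k=1}^n \mu_k(-2^k) = -\frac12\sum_{k=1}^n \frac1k$. Next use the definition of $\beta_n$:
\[
\beta_n = 1-\tfrac12\sum_{k=1}^n \frac{1}{k2^k} = \tfrac12\Bigl(2-\sum_{k=1}^n \frac{1}{k2^k}\Bigr).
\]
The bracket is exactly the denominator of $b_n$, so $\beta_n b_n = \frac12\sum_{k=1}^n\frac1k$. The two terms therefore cancel.

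For (2), note that $\alpha_1=\sum_{k=1}^N\mu_k$, so $\alpha_1+\beta_N=1$ directly from the definition of $\beta_N$. By the definition of $h^1$ we have $\alpha_1 h^1=\sum_{k=1}^N\mu_k(-2^k)$. Hence \eqref{eq:a1h1bbn} is item (1) with $n=N$.

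For (3), the first identity is $\alpha_k=\mu_k+\alpha_{k+1}$ divided by $\alpha_k$; this uses $\alpha_k>0$, which is the stated lower bound $\alpha_k \geq \frac1k 2^{-k-1}$ (indeed $\alpha_k\ge\mu_k$). For \eqref{eq:akhkbbn}, the definition of $h^{k+1}$ gives $\alpha_{k+1}h^{k+1}=\sum_{j=k+1}^N\mu_j(-2^j)$. Adding $\mu_k(-2^k)$ yields $\sum_{j=k}^N\mu_j(-2^j)=\alpha_k h^k$, and dividing by $\alpha_k$ gives the claim.

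The only step needing care is the constant bookkeeping in (1): one must see that the factor $2$ in $b_n$'s denominator pairs with the $\frac12$ in $\mu_k=\frac{1}{2k2^k}$. As a consistency check, for $j=N$ the definitions give $h^N=-2^N$, as the paper notes.
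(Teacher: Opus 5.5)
Your proof is correct and follows the paper's argument: both rest on $\mu_k(-2^k)=-\frac{1}{2k}$ and $\beta_n b_n=\frac12\sum_{k=1}^n\frac1k$, both get (2) as (1) with $n=N$, and both get (3) from $\alpha_k=\alpha_{k+1}+\mu_k$ and splitting off the $j=k$ term in the defining sum for $\alpha_k h^k$. Your explicit remark that $\alpha_k\geq\mu_k>0$ justifies the divisions, which the paper leaves implicit.
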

\begin{proof}
\begin{enumerate}

 \item 
 We have $\mu_k (-2^{k}) = -\frac{1}{2k}$ and $b_n = \frac{\sum_{k=1}^n \frac{1}{k}}{2-2 \sum_{k=1}^n \mu_k} = \frac{\frac{1}{2} \sum_{k=1}^n \frac{1}{k} }{\beta_n}$.
 \item $\alpha_1 + \beta_N=1$ is obvious. We also have $\beta_N b_N =  \frac{1}{2} \sum_{k=1}^N \frac{1}{k}$ so is $ \alpha_1 h^1 + \beta_N b_N = 0$.
 
 \item We have $\alpha_{k+1} + \mu_k = \alpha_{k}$, so $\frac{\alpha_{k+1}}{\alpha_k}+\frac{\mu_{k}}{\alpha_k} =1$ is clear. Moreover,
 \[
   \alpha_k^{-1} \brac{\alpha_{k+1} h^{k+1}+ \mu_{k} (-2^{k})}
   =\alpha_k^{-1} \brac{\sum_{i=k}^N \mu_i (-2^i)}
   =h^k.
 \]

\end{enumerate}
\end{proof}

%\iarmin{observe no error set, magic of $1D$}
\begin{proposition}\label{prop:MainConstructionStep1D}
For any $N\in\mathbb{N}$ and any $\delta > 0$ there exists open sets $\Omega_{B_N}$ and $\Omega_{G_k}$ so that 
\[
 [0,1] = \bigcup_{k=1}^N \Omega_{G_k} \cup \Omega_{B_N} \cup \mathcal{N}
\]
where $\mathcal{N}$ is a $\mathcal{H}^1$ zero set, and a piecewise map $u: [0,1] \to \R$, $u(0) = u(1) = 0$, $\|u\|_{L^\infty} \leq \delta$ and $w: [0,1] \to (0,\infty)$ a piecewise constant function so that
\[
 |\Omega_{B_N}| = \beta_N,
\]
\[
 |\Omega_{G_k}| = \mu_k,
\]
and 
\[
 w = 2^{-k} \quad u' = -2^k \quad \text{a.e. on $\Omega_{G_k}$},
\]
\[
 w = 1 \quad u' = b_N \quad \text{a.e. on $\Omega_{B_N}$}.
\]
If we extend $u$ and $w$ by
\[
 u \equiv 0 \text{ and } w \equiv 1 \quad \text{on $\mathbb{R} \setminus [0,1]$,}
\]
then for any cube $Q \subset\mathbb{R}$ and any $r > 2$ we have
\[
 \mvint_{Q} w \brac{\mvint_{Q} w^{-\frac{1}{r-1}}}^{r-1} \leq C(r)
\]
where $C(r)$ is a constant uniform in $N$.
\end{proposition}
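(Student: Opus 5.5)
The plan is to build $u$ and $w$ by an $N$-step iterative splitting in the spirit of convex integration, and to make every splitting fine-grained and Whitney-adapted so that the proportions $\mu_j$, $\alpha_j$ show up uniformly at every scale. Throughout, let $H_k$ denote the open set on which the current slope is $h^k$, with $w = 2^{-k}$ there.

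\emph{Step 0.} Start from $u\equiv 0$ on $[0,1]$. Since $\alpha_1+\beta_N=1$ and $\alpha_1h^1+\beta_Nb_N=0$ by \eqref{eq:a1h1bbn}, split $(0,1)$ into many tiny intervals of length $\ell_0$. On each one, let $u$ have slope $b_N$ on a subinterval of relative length $\beta_N$ and slope $h^1$ on the rest. Then $u$ vanishes at every endpoint of the partition and $\|u\|_\infty\lesssim \ell_0(|b_N|+|h^1|)\le\delta/2$. The $b_N$-pieces form $\Omega_{B_N}$ with $w=1$, and the $h^1$-pieces form $H_1$.

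\emph{Step $k\ge1$.} Take a Whitney decomposition of the open set $H_k$, as in \Cref{la:whitney}. Subdivide each Whitney interval $W$ periodically into tiny cells of length $\ll |W|$. By \eqref{eq:akhkbbn}, on each cell $u$ can have slope $-2^k$ on a portion of relative length $\mu_k/\alpha_k$ and slope $h^{k+1}$ on the remaining portion $\alpha_{k+1}/\alpha_k$, while agreeing with the previous affine profile at the cell endpoints. The $-2^k$-pieces form $G_k$ with $w=2^{-k}$, and the $h^{k+1}$-pieces form $H_{k+1}$. At step $N$ we have $h^N=-2^N$, so we simply set $\Omega_{G_N}=H_N$.

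Choosing the cell sizes small at each step keeps the total $L^\infty$ perturbation below $\delta$ (a geometric series) and preserves $u(0)=u(1)=0$. The measures come out as $|\Omega_{G_k}| = \alpha_k\cdot\mu_k/\alpha_k=\mu_k$ by induction, since $|H_k|=\alpha_k$. Only boundaries of countably many intervals are lost, so $\mathcal N$ is a null set. Moreover, on every Whitney interval $W$ of $H_{k}$ and every $j\ge k$,
\[
|W\cap H_j|=\tfrac{\alpha_j}{\alpha_k}|W|,\qquad |W\cap G_j| = \tfrac{\mu_j}{\alpha_k}|W|,
\]
and the same holds, up to a factor $2$, for any subinterval of $W$ of length at least one cell.

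\emph{$A_r$ bound.} Since $w\le 1$ everywhere, the only danger is the average of $w^{-1/(r-1)}$. For an interval $Q$, let $k_0$ be the largest $k$ such that $Q$ is essentially contained in $H_{k}$, i.e. $Q$ lies in one Whitney interval of $H_{k_0}$ and, at the cell level, is not contained in a single $G_{k_0}$-piece (if it is, $w$ is constant on $Q$ and there is nothing to prove). Then $\mvint_Q w\lesssim 2^{-k_0}$. Using the proportions above, together with \Cref{la:whitney} for those levels $j>k_0$ where $Q$ meets $H_j^c$, I expect to get $|Q\cap G_j|\lesssim \Lambda\,\frac{\alpha_j}{\alpha_{k_0}}|Q|$. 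Since $\alpha_j\approx 2^{-j}/j$, this gives
\[
\mvint_Q w^{-\frac1{r-1}}\lesssim \sum_{j\ge k_0}\frac{\alpha_j}{\alpha_{k_0}}2^{\frac{j}{r-1}}\lesssim 2^{\frac{k_0}{r-1}}\sum_{i\ge0}2^{-i\left(1-\frac1{r-1}\right)} = C(r)\,2^{\frac{k_0}{r-1}},
\]
because $r>2$. The product of the two averages is then bounded by $C(r)$, uniformly in $N$. Intervals $Q$ that meet $\R\setminus[0,1]$ or $\Omega_{B_N}$ are handled as the case $k_0=0$. Note that for $r=2$ the sum becomes $\sum_j \frac1j$, which diverges; this is exactly why the construction only works for $r>2$.

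\emph{Main obstacle.} The hard step is the claim $|Q\cap H_j|\lesssim \frac{\alpha_j}{\alpha_{k_0}}|Q|$ for an arbitrary $Q$, with constants not accumulating over the levels $j$. An interval $Q$ can straddle Whitney intervals of different sizes near $\partial H_j$, and it can be comparable to a cell size at some levels and not at others. My first attempt will be an induction on $j$. If $Q$ meets $H_j^c$, apply \Cref{la:whitney} to bound the total measure of the Whitney intervals of $H_j$ touching $Q$ by $\Lambda|Q|$. Each such interval has $H_{j+1}$-proportion exactly $\alpha_{j+1}/\alpha_j$, and the fine periodic cells make this proportion hold on sub-intervals as well. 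To avoid picking up a factor $\Lambda^{j-k_0}$, I will compare measures directly inside the Whitney intervals of the deepest level $H_j$ that $Q$ touches, rather than iterating the lemma level by level.
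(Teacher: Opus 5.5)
\paragraph{A gap at the step you flag yourself.}
Your construction is the paper's: sawtooth splittings on nested Whitney intervals, with the exact identity $|W\cap G_j|=\frac{\mu_j}{\alpha_k}|W|$ for every Whitney interval $W$ of $H_k$ and every $j\ge k$. Your $k_0$ is also the paper's case distinction. The gap is exactly the step you flag: you never prove $|Q\cap G_j|\lesssim\Lambda\frac{\alpha_j}{\alpha_{k_0}}|Q|$, and none of the routes you sketch gives it.
\begin{itemize}
\item Iterating \Cref{la:whitney} level by level costs $\Lambda^{j-k_0}$.
\item Applying it separately at each level $j$ only gives $|Q\cap G_j|\le\Lambda\frac{\mu_j}{\alpha_j}|Q|$, which has no decay in $j$.
\item Whitney intervals of the deepest level that $Q$ touches say nothing about the intermediate levels.
\item The lemma cannot be applied at level $k_0$ at all, since $Q\subset H_{k_0}$ does not meet $H_{k_0}^c$. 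For the same reason, ``$Q$ lies in one Whitney interval of $H_{k_0}$'' neither follows from your definition of $k_0$ nor helps.
\end{itemize}

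\paragraph{The missing idea.}
Apply the lemma exactly once, at level $k_0+1$, and enlarge $Q$ instead of computing proportions inside $Q$. By maximality of $k_0$, $Q$ meets $H_{k_0+1}^c$. Hence the Whitney intervals $W_{k_0+1;\ell}$ of $H_{k_0+1}$ that meet $Q$ have total length at most $\Lambda|Q|$. Up to a null set, $Q\subset G_{k_0}\cup H_{k_0+1}$ (with $G_0:=\Omega_{B_N}\cup(\R\setminus[0,1])$ and $H_0:=\R$). Since the integrand is positive,
\[
\int_Q w^{-\frac{1}{r-1}}\le 2^{\frac{k_0}{r-1}}|Q|+\sum_{\ell:\,W_{k_0+1;\ell}\cap Q\neq\emptyset}\int_{W_{k_0+1;\ell}}w^{-\frac{1}{r-1}}\le|Q|\brac{2^{\frac{k_0}{r-1}}+\Lambda\sum_{j=k_0+1}^{N}\frac{\mu_j}{\alpha_{k_0+1}}\,2^{\frac{j}{r-1}}}.
\]
Here the nested identity is used on whole Whitney intervals, for all $j\ge k_0+1$ at once. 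Since $\frac{\mu_j}{\alpha_{k_0+1}}\le\frac{\mu_j}{\mu_{k_0+1}}\le 2^{k_0+1-j}$, the bracket is at most $C(r)\,2^{\frac{k_0}{r-1}}$ because $r>2$. Combined with $\mvint_{Q} w\le 2^{-k_0}$, this gives a bound uniform in $N$ with a single factor $\Lambda$.

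This is what the paper's Cases~1 and~2 do. Its Case~2 is written with an index shift: it sums over $W_{j;\ell}$ while assuming $Q\subset\Omega_{H^j}$, but the lemma must be used one level down, as above.

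\paragraph{The subinterval claim is unnecessary.}
Your claim about proportions on subintervals of length at least one cell is not needed. Beyond the first splitting inside $W$ it is also not justified as stated: the components of $H_{k+1}$ cut by the endpoints of the subinterval would need their own application of \Cref{la:whitney}. The cells only serve the $L^\infty$ bound.
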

\begin{proof}
By \eqref{eq:a1h1bbn} we can find $u_1$ piecewise affine (the typical sawtooth) with $u_1(0) = u_1(1) = 0$ such that for open, disjoint sets $\Omega_{H^1}$, $\Omega_{B_N}$
\[
 [0,1] = \Omega_{H^1} \cup \Omega_{B_N} \cup \mathcal{N},
\]
where $\mathcal{N}$ is a zeroset,
\[
 |\Omega_{B_N}| = \beta_N, \quad |\Omega_{H^1}| = \alpha_1,
\]
and 
\[
 u_1' = b_N \quad \text{a.e. on $\Omega_{B_N}$},
\]
and 
\[
 u_1' = h^1 \quad \text{a.e. on $\Omega_{H^1}$}.
\]
Increasing the number of oscillations we can also assume that 
\[
\|u_1\|_{L^\infty} \leq 2^{-1}\delta.
\]
We set 
\[
 w := 1 \quad \text{on $\Omega_{B_N}$},
\]
and extend 
\[
 u_1 \equiv 0 \text{ and } w \equiv 1 \quad \text{on $\mathbb{R} \setminus [0,1]=:\Omega^e$}.
\]

Assume now that for $k=1,\ldots,N-1$ we have constructed $u_k$ on $[0,1]$ and have found $\Omega_{H^k}$, $\Omega_{G_1}, \ldots, \Omega_{G_{k-1}}$, $\Omega_{B_N}$ all open and pairwise disjoint sets that cover $[0,1]$ up to a zero set - that satisfy the conclusions of the proposition, and moreover 
\[
 |\Omega_{H^k}| = \alpha_k.
\]

We construct $u_{k+1}$ as follows:

Take a Whitney decomposition (in the sense of \Cref{la:whitney}) of the open set $\Omega_{H^k} = \bigcup_{\ell=1}^\infty \overline{W_{k;\ell}}$ where $W_{k;l}$ are the open, pairwise disjoint Whitney cubes. We can ensure that the Whitney cube are nested as we progress along $k$, i.e. we may assume $W_{k;\ell} \cap W_{k-1;\tilde{\ell}} \neq \emptyset$ implies $W_{k;\ell} \subset W_{k-1;\tilde{\ell}}$.

On each $W_{k;\ell}$ the function $u_{k}$ is affine linear, 
and we have $u_k' \equiv h^k$. By \eqref{eq:akhkbbn} we can replace $u_k$ with a map $u_{k+1}$ (again, the typical sawtooth) so that $u_{k+1}\equiv u_k$ on $\partial W_{k,\ell}$ and find open sets $\tilde{O}_{k;\ell;H^{k+1}}$, $\tilde{O}_{k;\ell;G_{k}}$ such that
\[
W_{k;\ell} = \tilde{O}_{k;\ell;H^{k+1}} \cup \tilde{O}_{k;\ell;G_{k}} \cup \mathcal{N},
\]
where $\mathcal{N}$ is a zeroset, and 
\[
 |\tilde{O}_{k;\ell;H^{k+1}}| = \frac{\alpha_{k+1}}{\alpha_k} |W_{k;\ell}|,
\]
\[
 |\tilde{O}_{k;\ell;G_{k}}| = \frac{\mu_k}{\alpha_k} |W_{k;\ell}|,
\]
and we have 
\[
 u_{k+1}' = -2^k \quad \text{a.e. on $\tilde{O}_{k;\ell;G_{k}}$},
\]
and 
\[
 u_{k+1}' = h^{k+1} \quad \text{a.e. on $\tilde{O}_{k;\ell;H^{k+1}}$}.
\]
We set 
\[
 \Omega_{H^{k+1}} := \bigcup_{\ell} \tilde{O}_{k;\ell;H^{k+1}},
\]
and
\[
 \Omega_{G_{k}} := \bigcup_{\ell} \tilde{O}_{k;\ell;G_{k}}.
\]
Since this $u_{k+1}$ is a sawtooth, again increasing the number of oscillations, we can ensure that 
\[
\|u_{k+1}-u_k\|_{L^\infty} \leq2^{-(k+1)}\delta
\]
Then (for a new zero-set $\mathcal{N}$ which we not relabel)
\[
 \Omega_{H^{k}} = \Omega_{H^{k+1}} \cup \Omega_{G_{k}} \cup \mathcal{N},
\]
and by induction hypothesis
\[
 |\Omega_{G_k}| = \sum_{\ell} |W_{k;\ell}|  \frac{\mu_k}{\alpha_k} = |\Omega_{H^k}|  \frac{\mu_k}{\alpha_k} = \mu_k,
\]
and 
\[
 |\Omega_{H^{k+1}}| = \sum_{\ell} |W_{k;\ell}| \frac{\alpha_{k+1}}{\alpha_k} = |\Omega_{H^k}|  \frac{\alpha_{k+1}}{\alpha_k}  = \alpha_{k+1}.
\]
We set 
\[
 w := 2^{-k} \quad \text{on $\Omega_{G_k}$}.
\]

By induction, we have now defined $w$ and $u:=u_{N}$ on all of $[0,1]$. Since these are finitely many steps, we see that $u$ is Lipschitz, and $w$ is piecewise constant.

Moreover we have (with $u_0 \equiv 0$)
\[
\|u_{N}\|_{L^\infty} \leq \sum_{k=1}^N \|u_{k}-u_{k-1}\|_{L^\infty} \leq \sum_{k=1}^N 2^{-k-1}\delta \leq \delta.
\]

We also observe that 
\begin{equation}
 \label{eq:westonOmegaH}
 0<w \leq 2^{-k} \quad \text{a.e. in $\Omega_{H^k}$}.
\end{equation}

Moreover for any Whitney cube $W_{k;\ell}$ we have, from the nested property of the Whitney cubes, for $j \geq k$
\begin{equation}\label{eq:weq2konwhitney}
 |\{w = 2^{-j}\} \cap W_{k;\ell}| = \prod_{i=k}^{j-1} \frac{\alpha_{i+1}} {\alpha_{i}} \frac{\mu_j}{\alpha_j} |W_{k;\ell}| =  \frac{\mu_j}{\alpha_k} |W_{k;\ell}|.
\end{equation}

Let now $Q$ be an arbitrary cube in $\mathbb{R}$.

\underline{Case 0}: If $Q \subset \Omega_{G^k}$ or $Q \subset \Omega_{B_N} \cup \Omega^e$ then $w$ is constant and thus 
\[
 \mvint_{Q} w \brac{\mvint_{Q} w^{-\frac{1}{r-1}}}^{r-1} =1.
\]

\underline{Case 1}: Assume $Q \cap (\Omega_{B_N} \cup \Omega^e) \neq \emptyset$ and $Q \cap \Omega_{H^1} \neq \emptyset$. Since $(W_{1;\ell})_{\ell \in \N}$ are Whitney cubes of $\Omega_{H^1}$ and $Q \cap \R \setminus \Omega_{H^1} \neq \emptyset$ we have 
\[
 \sum_{\ell \in \N: W_{1;\ell} \cap Q \neq \emptyset} |W_{1;\ell}|\leq 2 |Q|.
\]
We then have 
\[
 \mvint_{Q} w \overset{\eqref{eq:westonOmegaH}}{\leq} 1,
\]
and 
\[
\begin{split}
 \mvint_{Q} w^{-\frac{1}{r-1}} \leq &\frac{|B_{N} \cap Q|}{|Q|}+ \sum_{\ell \in \N: W_{1;\ell} \cap Q \neq \emptyset} \frac{|W_{1;\ell}|}{|Q|} \mvint_{W_{1;\ell}}w^{-\frac{1}{r-1}} \\
\overset{\eqref{eq:weq2konwhitney}}{\leq} &\frac{|B_{N} \cap Q|}{|Q|}+ \sum_{\ell \in \N: W_{1;\ell}\cap Q \neq \emptyset} \frac{|W_{1;\ell}|}{|Q|} \sum_{k=1}^N \frac{\mu_k}{\alpha_1} 2^{k \frac{1}{r-1}}\\  
\leq &1+ \sum_{\ell \in \N: W_{1;\ell}\cap Q \neq \emptyset} \frac{|W_{1;\ell}|}{|Q|} \sum_{k=1}^N \frac{1}{\alpha_1 k 2^{k+1} } 2^{k \frac{1}{r-1}}\\ 
\leq &1+ \sum_{\ell \in \N: W_{1;\ell}\cap Q \neq \emptyset} \frac{|W_{1;\ell}|}{|Q|} \sum_{k=1}^\infty \frac{1}{k} 2^{k (\frac{1}{r-1}-1)}\\
\aleq & \sum_{k=1}^\infty \frac{1}{k} 2^{k (\frac{2-r}{r-1})}.
\end{split}
\]
We used \Cref{la:whitney} in the last step.
We see that the latter is finite (i.e. uniformly bounded in terms of $N$) exactly when $r >2$.

\underline{Case 2}: If $Q \cap B_{N} = \emptyset$, then for some $j \in\{1,\ldots,N-1\}$ we have $Q \subset \Omega_{H^i}$ for $i=0,....j$ and $Q\not \subset \Omega_{H^{j}}$. Thus, $Q \cap \Omega_{G_j}\equiv Q \cap \Omega_{H^j} \setminus \Omega_{H^{j+1}}  \neq \emptyset$.

In this case we have $w \big |_{Q} \leq 2^{-j}$, and thus 
\[
 \mvint_{Q} w \leq 2^{-j}.
\]
Again, we use \Cref{la:whitney}, and get
\[
\begin{split}
 \mvint_{Q} w^{-\frac{1}{r-1}} \leq &\sum_{\ell \in \N: W_{j;\ell} \cap Q \neq \emptyset} \frac{|W_{j;\ell}|}{|Q|} \mvint_{W_{j;\ell}}w^{-\frac{1}{r-1}} \\
\overset{\eqref{eq:weq2konwhitney}}{\leq} &\sum_{\ell \in \N: W_{j;\ell}\cap Q \neq \emptyset} \frac{|W_{j;\ell}|}{|Q|} \sum_{k=j}^N \frac{\mu_k}{\alpha_j} 2^{k \frac{1}{r-1}}\\  
\aleq & \frac{1}{\alpha_j } \sum_{k=j}^N \frac{1}{k} 2^{k (\frac{2-r}{r-1})}\\
\leq & 2j 2^{j} \sum_{k=j}^N \frac{1}{k} 2^{k (\frac{2-r}{r-1})}.\\ 
\end{split}
\]
Thus,
\[
\begin{split}
 &\mvint_{Q} w \brac{\mvint_{Q} w^{-\frac{1}{r-1}}}^{r-1}\\
 \aleq_{r} &2^{-j}  \brac{2^{j \frac{1}{r-1}} \sum_{k=j}^N \frac{j}{k} 2^{(k-j) (\frac{2-r}{r-1})} }^{r-1}\\  
 \aleq_{r} &\brac{\sum_{k=j}^N 2^{(k-j) (\frac{2-r}{r-1})} }^{r-1}\\
 \aleq_{r} &\brac{\sum_{k=1}^\infty 2^{k (\frac{2-r}{r-1})} }^{r-1}.\\
 \end{split}
\]
and again we see that this is bounded uniformly in $N$, if $r >2$.
\end{proof}

\begin{proof}[Proof of Theorem~\ref{th:Baby} in one dimension]
Let $p>2$ and $1\leq s<p-1$, and apply Proposition \ref{prop:MainConstructionStep1D}. Set 
\[
 wF := wu'+1.
\]
Then $\div(w\nabla u)=\div(wF)$, and we have $wF = 0$ a.e. in $[0,1] \setminus \Omega_{B_N}$ and thus 
\[
 \int_{(0,1)} w^s|F|^p =  \beta_N^s |b_N|^p \aleq \brac{\log N}^p.
\]
On the other hand,
\[
\begin{split}
 \int_{(0,1)} w^s |u'|^p =& \beta_N^s |b_N|^p + \sum_{k=1}^N \mu_k 2^{-sk} 2^{pk}\\
 \geq&\sum_{k=1}^N \frac{1}{2 k 2^k} 2^{k(p-s)}
 \aeq \sum_{k=1}^N \frac{1}{k} 2^{k(p-s-1)}. \\
 \end{split}
\]
We see that for $p>2$ and $s<p-1$ we can chose $N$ depending on $\Gamma$ suitably large so that 
\begin{equation}
 \int_{(0,1)} w^s |u'|^p >\Gamma  \int_{(0,1)} w^s|F|^p.\label{eq:1ds=p/2}
\end{equation}

For $p=2$ and $s=1$ this is not possible, since then both sides grow like $(\log N)^p$. Since $\delta$ can be chosen such that $\Vert u\Vert_{L^\infty}\ll 1$, we also have
\[
\|u\|_{L^\infty}^p \int_{(0,1)} w^s\ll 1,
\]
which implies \eqref{eq:wronginequality}.
Lastly, note that the same construction can be done on any interval, in particular on $(-\frac{1}{2},\frac{1}{2})$, and both $u$ and $F$ vanish outside of this interval. This completes the proof of Theorem~\ref{th:Baby} in one dimension.

\end{proof}

\section{Proof of Theorem~\ref{th:Baby} in two dimensions}\label{sec:Baby2D}

We define $b_n,\mu_n,\beta_n,\alpha_n,$ and $h^n$ as in the beginning of Section \ref{sec:Baby1D}, the proof of Theorem~\ref{th:Baby} in one dimension.

We prove a result similar to \Cref{prop:MainConstructionStep1D}. Note here, that additional technical difficulties arise due to the higher dimension $n=2$, namely the existence of the error set $\Omega_{error}$.

\begin{proposition}\label{prop:MainConstructionStep2D}
For any $N\in\mathbb{N}$ and any $\eps>0$ there exists a set $\Omega_{error}$ and open sets $\Omega_{B_N}$ and $\Omega_{G_k}$ so that 
\begin{equation}
 \mathbb{B}^2 = \bigcup_{k=1}^N \Omega_{G_k} \cup \Omega_{B_N} \cup\Omega_{error},\label{eq:B2isUnion}
\end{equation}
and a piecewise map $u: \mathbb{B}^2 \to \R$ with $u|_{\partial\mathbb{B}^2}=0$, $\|u\|_{L^\infty} \leq \eps$ and $w: \mathbb{B}^2 \to (0,\infty)$ a piecewise constant function so that
\begin{equation}\label{eq:measureofOmega_G_k}
\begin{split}
    (\pi-\eps)\beta_N\leq|\Omega_{B_N}|\leq (\pi+\eps) \beta_N,\\
 (\pi-\eps)\mu_k\leq|\Omega_{G_k}|\leq (\pi+\eps)\mu_k,\\
 \int_{\Omega_{error}}1+|\nabla u|^p\leq \eps,
\end{split}
\end{equation}
and 
\begin{equation}\label{eq:wanduongoodset}
\begin{split}
 w = 2^{-k}, \quad \nabla u = \begin{pmatrix}
  -2^{k} \\
  0 
 \end{pmatrix} \quad \text{a.e. on $\Omega_{G_k}$, and}\qquad
 w = 1, \quad \nabla u = \begin{pmatrix}
  b_N \\
  0 
 \end{pmatrix}\quad \text{a.e. on $\Omega_{B_N}$}.
 \end{split}
\end{equation}
If we extend $u$ and $w$ by
\[
 u \equiv 0 \text{ and } w \equiv 1 \quad \text{on $\mathbb{R}^2 \setminus \mathbb{B}^2$,}
\]
then for any cube $Q \subset\mathbb{R}$ and any $r > 2$ we have
\begin{equation}\label{eq:A_rInNewConst}
 \mvint_{Q} w \brac{\mvint_{Q} w^{-\frac{1}{r-1}}}^{r-1} \leq C(r),
\end{equation}
where $C(r)$ is a constant uniform in $N$.
\end{proposition}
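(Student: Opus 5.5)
We mimic the proof of \Cref{prop:MainConstructionStep1D}, replacing the one-dimensional sawtooth by a two-dimensional \emph{elementary splitting} with boundary control. The basic building block is the following lemma, which we prove first.

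Let $U\subset\R^2$ be open and bounded (a ball or a Whitney square), let $\ell$ be affine on $U$ with $\nabla \ell=(a,0)^T$, and suppose $a=\lambda A+(1-\lambda)B$ with $\lambda\in(0,1)$ and $A,B\in\R$. Then for every $\eta>0$ there is a piecewise affine Lipschitz $v$ on $U$ with $v=\ell$ on $\partial U$ and $\|v-\ell\|_{L^\infty}\le\eta$. Moreover there are open sets $O_A,O_B\subset U$ and $E\subset U$ such that $\nabla v=(A,0)^T$ on $O_A$, $\nabla v=(B,0)^T$ on $O_B$, and $\big||O_A|-\lambda|U|\big|\le\eta|U|$, $\big||O_B|-(1-\lambda)|U|\big|\le\eta|U|$, with
\[
\int_E 1+|\nabla v|^p\le \eta|U|.
\]

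The proof is standard. Cover $U$, up to a set of small measure, by many tiny squares compactly inside $U$. On each tiny square take the 1D sawtooth in $x_1$ with slopes $A,B$ and volume fractions $\lambda,1-\lambda$ and period $\tau$. Glue it to $\ell$ by a cutoff layer of width $\sim\tau^{1/2}$. In that layer the gradient is bounded by $C(|A|+|B|+|a|)$, since the cutoff gradient $\tau^{-1/2}$ times the amplitude $\tau$ is small. The layer measure tends to $0$ as $\tau\to0$. The leftover region, where $v=\ell$, has gradient $(a,0)$; we throw it into $E$ as well, and it too has small measure. Choosing the tiny squares small and $\tau$ smaller gives all the claims.

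With the lemma in hand we run the induction. In the first step we apply the lemma on $\mathbb B^2$ with $\ell\equiv0$ and the decomposition $0=\alpha_1h^1+\beta_Nb_N$ from \eqref{eq:a1h1bbn}. This produces $u_1$, $\Omega_{B_N}$ (where $w=1$) and $\Omega_{H^1}$. In the inductive step we take a dyadic Whitney decomposition $\{W_{k;\ell}\}$ of the open set $\Omega_{H^k}$, nested in $k$. On each $W_{k;\ell}$ we apply the lemma with the splitting \eqref{eq:akhkbbn}, using $\eta=\eta_k$ small and $\|u_{k+1}-u_k\|_\infty\le2^{-k-1}\eps$. We put $w=2^{-k}$ on $\Omega_{G_k}$. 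The error set of each $W_{k;\ell}$ is added to $\Omega_{error}$, and on it we set $w:=2^{-k}$, the value on the ambient $\Omega_{H^k}$. The measure and error bounds in \eqref{eq:measureofOmega_G_k} then follow by summing $\eta_k|W_{k;\ell}|$ over $\ell$ and $k$, since the gradients are uniformly bounded for fixed $N$. The fractions multiply as $\prod(\alpha_{i+1}/\alpha_i\pm\eta_i)$, which stays within $\eps$ of the exact values once the $\eta_i$ are small. \eqref{eq:wanduongoodset} holds by construction.

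The main obstacle is the $A_r$ bound \eqref{eq:A_rInNewConst}, which must be uniform in $N$ despite the error sets. The key is the analogue of \eqref{eq:weq2konwhitney}: for each Whitney square,
\[
|\{w=2^{-j}\}\cap W_{k;\ell}|\le 2\frac{\mu_j}{\alpha_k}|W_{k;\ell}|\quad(j\ge k),
\]
and error pieces inside $W_{k;\ell}$ created at level $i$ carry $w=2^{-i}$ with measure $\le\eta_i|W_{k;\ell}|$. To keep this harmless we pick $\eta_i\le \mu_i/\alpha_1$, so the error pieces obey the same density bound, up to a factor $2$, as $\{w=2^{-i}\}$. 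Because the lemma is applied separately on every Whitney square, these density bounds hold square by square, and not merely globally.

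The case analysis then copies the 1D proof. Case 0 is when $w$ is constant on $Q$. Case 1 is when $Q$ meets the complement of $\Omega_{H^1}$; there we use $\mvint_Q w\le1$ and \Cref{la:whitney} to bound $\mvint_Q w^{-1/(r-1)}\aleq\sum_k \frac1k2^{k(2-r)/(r-1)}$. Case 2 is when $j$ is maximal with $Q\subset\Omega_{H^j}$; there $w\le2^{-j}$ on $Q$, and \Cref{la:whitney} for the cubes $W_{j;\ell}$ gives the same geometric sum. Both bounds are finite exactly because $r>2$.

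One point needs care in Case 2. A cube $Q$ contained in $\Omega_{H^j}$ but meeting $\Omega_{G_j}$ or the level-$j$ error set does not intersect $\partial\Omega_{H^j}$, so it is not directly covered by \Cref{la:whitney}. Either $Q$ meets the complement of $\Omega_{H^{j+1}}$ within some $W_{j;\ell}$, or $Q\subset W_{j;\ell}$ for a single $\ell$. In the latter case we simply use the density bound on $W_{j;\ell}$ if $|Q|\gtrsim|W_{j;\ell}|$. Otherwise we recurse into the finer structure of the lemma, where the tiny squares act like 1D Whitney cells, and we treat this as a Whitney-type argument at the next level.
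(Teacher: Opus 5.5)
Your construction follows the paper's. Your hand-made splitting lemma (a sawtooth in $x_1$ on tiny squares, cut off in a layer of width $\tau^{1/2}$) is the rank-one case of \cite[Lemma 2.1]{KMSX24}, which the paper cites. The nested Whitney decompositions and the convention $w=2^{-k}$ on the error created inside $\Omega_{H^k}$ also match. So does the square-by-square density bound, with your $\eta_i\le\mu_i/\alpha_1$ doing the job of the paper's exponentially small $\eps_i$.

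The gap is in Case~2 of the $A_r$ estimate. You correctly note that \Cref{la:whitney} cannot be applied to the cubes $W_{j;\ell}$ when $Q\subset\Omega_{H^j}$, but your repair is not a proof. Your two alternatives are not exclusive. A cube meeting $\Omega_{G_j}$, or the error created inside the $W_{j;\cdot}$, \emph{always} meets $\R^2\setminus\Omega_{H^{j+1}}$. So the first alternative always occurs, and you never say how to treat it. For the second, ``recurse into the finer structure of the lemma, where the tiny squares act like 1D Whitney cells'' is not an argument. A small $Q$ inside one tiny square can cross many thin strips of $\Omega_{H^{j+1}}$ and $\Omega_{G_j}$. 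Inside those strips $w$ has further structure at all smaller scales, and the tiny squares give no control over it. Your large-cube subcase ($|Q|\gtrsim|W_{j;\ell}|$) is fine, but it is not where the difficulty lies.

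The missing idea is to go one level down. Let $j$ be maximal with $Q\subset\Omega_{H^j}$, where $\Omega_{H^0}:=\R^2$; if $j=N$, then $w$ is constant on $Q$. Since $Q$ meets the complement of $\Omega_{H^{j+1}}$, \Cref{la:whitney} applies to the open set $\Omega_{H^{j+1}}$ and its cubes $W_{j+1;\ell}$. This gives $\sum_{\ell:\,W_{j+1;\ell}\cap Q\neq\emptyset}|W_{j+1;\ell}|\le\Lambda|Q|$.

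Up to a null set, $Q\setminus\Omega_{H^{j+1}}$ lies in $\Omega_{G_j}$ together with the error pieces created inside the $W_{j;\cdot}$. For $j=0$ it lies instead in $\Omega^e\cup\Omega_{B_N}$ and the first error set. On all of these, $w=2^{-j}$ \emph{exactly}. Now use your density bound on each $W_{j+1;\ell}$ and the estimate $\mu_i/\alpha_{j+1}\le 2^{j+1-i}$ for $i\ge j+1$. This yields
\[
\frac{1}{|Q|}\int_Q w^{-\frac{1}{r-1}}\le 2^{\frac{j}{r-1}}+C\Lambda\sum_{i\ge j+1}2^{j+1-i}\,2^{\frac{i}{r-1}}\aleq_r 2^{\frac{j}{r-1}}.
\]
Together with $\frac{1}{|Q|}\int_Q w\le 2^{-j}$, this gives the bound uniformly in $N$.

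This single argument covers both your Cases 1 and 2, with Case 1 being $j=0$. It uses only the nesting and the density bounds on Whitney cubes, never the internal geometry of the splitting. The paper's 1D Case~2 has an index slip at exactly this point, so its ``almost verbatim'' transfer to 2D also needs this correction.
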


\begin{proof}
First, we define the matrices 
\[
 G_n := \begin{pmatrix}
         -2^{n} & 0\\
         0 & 0
        \end{pmatrix},
\]
\[
 H_n := \begin{pmatrix}
         h^n & 0\\
         0 & 0
        \end{pmatrix},
\]
and
\[
 B_N := \begin{pmatrix}
         b_N & 0\\
         0 & 0
        \end{pmatrix}.
\]
Let $(\eps_i)_i$ be a sequence to be determined later.
Due to \eqref{eq:a1h1bbn}, we note that $\alpha_1\delta_{H^1}+\beta_N\delta_{B_N}$ is a laminate of finite order with baricenter
\[\begin{pmatrix}
  0 & 0\\
  0 & 0
 \end{pmatrix}.\]
Hence, we can apply \cite[Lemma 2.1]{KMSX24} and obtain two open sets $\Omega_{H^1}$ and $\Omega_{B_N}$ and a piecewise affine function $u_1:\mathbb{B}^2\to\mathbb{R}^n$ with $u_1|_{\partial\mathbb{B}^2}=0$, $\|u_1\|_{L^\infty} \leq 2^{-1} \eps$, such that
\[
 (1-\eps_1)\beta_N|\mathbb{B}^2|\leq|\Omega_{B_N}|\leq (1+\eps_1)\pi \beta_N,\qquad \textrm{and}\qquad
 (1-\eps_1)\pi\alpha_1\leq|\Omega_{H_1}|\leq (1+\eps_1)\pi\alpha_1,
\]
and 
\[
 \nabla u_1 = \begin{pmatrix}
         b_N \\
          0
        \end{pmatrix} \quad \text{a.e. on $\Omega_{B_N}$},
\]
and 
\[
 \nabla u_1 = \begin{pmatrix}
         h^1 \\
          0
        \end{pmatrix} \quad \text{a.e. on $\Omega_{H^1}$}.
\]
We set 
\[\Omega_{1,error}:=\mathbb{B}^2\setminus(\Omega_{H^1}\cup\Omega_{B_N}),\]
and
\[
 w := 1 \quad \text{on $\Omega_{B_N}\cup\Omega_{1,error}$},
\]
and extend 
\[
 u_1 \equiv 0 \text{ and } w \equiv 1 \quad \text{on $\mathbb{R}^2 \setminus \mathbb{B}^2=:\Omega^e$}.
\]
Also from \cite[Lemma 2.1]{KMSX24}, we obtain that $|\nabla u_1|\leq \max\{b_N, h_1\}\leq 2$, and hence
\[
    \int_{\Omega_{1,error}}1+|\nabla u_1|^p\leq 2^{p+1}\eps_1\pi.
\]

As in the Section \ref{sec:Baby1D}, we would like to proceed with the construction inductively. Assume now that for $k=1,\ldots,N-1$ we have constructed $u_k$ on $\mathbb{B}^2$ and have found $\Omega_{H^k}$, $\Omega_{G_1}, \ldots, \Omega_{G_{k-1}}$, $\Omega_{B_N}$ all open and pairwise disjoint sets such that the conclusion of the proposition holds. Furthermore, let $\Omega_{1,error},...,\Omega_{2,error}$ be sets such that $\mathbb{B}^2 =\Omega_{H^k}\cup \bigcup_{i=1}^{k-1} \Omega_{G_i} \cup \Omega_{B_N} \cup \bigcup_{i=1}^{k}\Omega_{i, error}$, where all of the sets are pairwise disjoint. Assume now that
\begin{equation}
 \prod_{i=1}^k(1-\eps_i)\pi\alpha_k\leq |\Omega_{H^k}| = \prod_{i=1}^k(1+\eps_i)\pi\alpha_k\label{eq:IndHypod=2}
\end{equation}
and
\begin{equation}
    \int_{\Omega_{i,error}}1+|\nabla u_k|^p\leq 2^{i(p+1)}\eps_i\pi\prod_{j=1}^{i-1}(1+\eps_j)\alpha_{i-1}\qquad \textrm{ for all }i=1,...k.\label{eq:IndHypod=2.2}
\end{equation}

As before in the proof of Proposition \ref{prop:MainConstructionStep1D}, take a Whitney decomposition (in the sense of \Cref{la:whitney}) of the open set $\Omega_{H^k} = \bigcup_{\ell=1}^\infty \overline{W_{k;\ell}}$ where $W_{k;l}$ are the open, pairwise disjoint Whitney cubes that are nested as we progress along $k$, i.e. we may assume $W_{k;\ell} \cap W_{k-1;\tilde{\ell}} \neq \emptyset$ implies $W_{k;\ell} \subset W_{k-1;\tilde{\ell}}$.

On each $W_{k;\ell}$ the function $u_{k}$ is affine linear, 
and we have $\nabla u_k \equiv \begin{pmatrix}h^k \\0\end{pmatrix}$. By \eqref{eq:akhkbbn}, $\alpha_{k}^{-1}\Big(\alpha_{k+1}\delta_{H^{k+1}}+\mu_{k+1}\delta_{G_{k}}\Big)$ is a laminate of finite order with baricenter $H^k$, and hence \cite[Lemma 2.1]{KMSX24} allows to replace $u_k$ with a map $u_{k+1}$ so that $u_{k+1} = u_k$ on $\partial W_{k,\ell}$, $\|u_{k_1}-u_k\|_{L^\infty} \leq 2^{-k-1} \eps$, and allows to find disjoint open sets $\tilde{O}_{k;\ell;H^{k+1}}$, $\tilde{O}_{k;\ell;G_{k}}$ such that
\[
W_{k;\ell} = \tilde{O}_{k;\ell;H^{k+1}} \cup \tilde{O}_{k;\ell;G_{k}} \cup \Omega_{k+1,l,error}
\]
with 
\[
 (1-\eps_{k+1})\frac{\mu_k}{\alpha_k}|W_{k,l}|\leq|\tilde{O}_{k;\ell;G_{k}}|\leq (1+\eps_{k+1}) \frac{\mu_k}{\alpha_k}|W_{k,l}|,
\]
and
\[
 (1-\eps_{k+1})\frac{\alpha_{k+1}}{\alpha_k}|W_{k,l}|\leq|\tilde{O}_{k;\ell;H^{k+1}}|\leq (1+\eps_{k+1})\frac{\alpha_{k+1}}{\alpha_k}|W_{k,l}|,
\]
where we set
\[\Omega_{k+1,l,error}:=W_{k,l}\setminus(\tilde{O}_{k;\ell;H^{k+1}}\cup\tilde{O}_{k;\ell;G_{k}}).\]
Furthermore we have 
\[
 \nabla u_{k+1} = \begin{pmatrix} -2^k \\ 0 \end{pmatrix} \quad \text{a.e. on $\tilde{O}_{k;\ell;G_{k}}$},
\]
and 
\[
 \nabla u_{k+1} = \begin{pmatrix} h^{k+1} \\ 0 \end{pmatrix} \quad \text{a.e. on $\tilde{O}_{k;\ell;H^{k+1}}$}.
\]
We set 
\[
 \Omega_{H^{k+1}} := \bigcup_{\ell} \tilde{O}_{k;\ell;H^{k+1}},
\]
\[
 \Omega_{G_{k}} := \bigcup_{\ell} \tilde{O}_{k;\ell;G_{k}},
\]
\[
 \Omega_{k+1,error} := \bigcup_{\ell} \Omega_{k+1,l,error}.
\]
Then
\[
 \Omega_{H^{k}} = \Omega_{H^{k+1}} \cup \Omega_{G_{k}} \cup \Omega_{k+1,error},
\]
and by induction hypothesis \eqref{eq:IndHypod=2}
\[
 \prod_{i=1}^{k+1}(1-\eps_i)\alpha_{k+1}\leq(1-\eps_{k+1})\frac{\alpha_{k+1}}{\alpha_k}|\Omega_{H^k}|=\sum_l(1-\eps_{k+1})\frac{\alpha_{k+1}}{\alpha_k}|W_{k,l}|\leq \sum_l|\tilde{O}_{k;\ell;H^{k+1}}|\leq|\Omega_{H^{k+1}}|,
\]
and
\[
 |\Omega_{H^{k+1}}|\leq \sum_l|\tilde{O}_{k;\ell;H^{k+1}}|\leq\sum_l(1+\eps_{k+1})\frac{\alpha_{k+1}}{\alpha_k}|W_{k,l}| =(1+\eps_{k+1})\frac{\alpha_{k+1}}{\alpha_k}|\Omega_{H^k}|\leq \prod_{i=1}^{k+1}(1+\eps_i)\alpha_{k+1}.
\]
Similarly, 
\[
 \prod_{i=1}^{k+1}(1-\eps_i)\mu_{k}\leq(1-\eps_{k+1})\frac{\mu_k}{\alpha_k}|\Omega_{G_k}|=\sum_l(1-\eps_{k+1})\frac{\mu_k}{\alpha_k}|W_{k,l}|\leq \sum_l|\tilde{O}_{k;\ell;G_{k}}|\leq|\Omega_{G^{k}}|,
\]
and
\[
    |\Omega_{G_k}|\leq
    \sum_l|\tilde{O}_{k;\ell;G_k}| \leq \sum_l(1+\eps_{k+1})\frac{\mu_k}{\alpha_k}|W_{k,l}|= (1+\eps_{k+1})\frac{\mu_k}{\alpha_k}|\Omega_{G_k}|
    \leq\prod_{i=1}^{k+1}(1+\eps_i)\mu_{k}.
\]
Furthermore $|\nabla u_{k+1}|\leq \max\{2^{k},h^{k+1}\}\leq 2^k,$, and hence by \eqref{eq:IndHypod=2}
\[\int_{\Omega_{k+1,error}}1+|\nabla u|^p\leq 2^{(k+1)(p+1)}\eps_{k+1}|\Omega_{H^k}|\leq 2^{(k+1)(p+1)}\eps_{k+1}\pi\prod_{j=1}^{k}(1+\eps_j)\alpha_{k}.\]

We set 
\[
 w := 2^{-k} \quad \text{on $\Omega_{G_k}\cup \Omega_{k+1,error}$}.
\]

Let $\eps>0$ be arbitrary now and let us choose the sequence $(\eps_i)_i$ accordingly so that the conclusion of the proposition holds. Set $\Omega_{error}=\bigcup_{k=1}^N\Omega_{k,error}$. Then \eqref{eq:B2isUnion} holds. Choosing $|\eps_i|\leq 2^{-3N(p+1)}\pi\eps$ and from \eqref{eq:IndHypod=2.2}, we get
\[
\begin{split}
    \int_{\Omega_{error}}1+|\nabla u|^p&\leq \sum_{k=1}^N\int_{\Omega_{k,error}}1+|\nabla u|^p\leq \sum_{k=1}^N 2^{k(p+1)}\eps_i\pi\prod_{j=1}^{k-1}(1+\eps_j)\alpha_{k-1}
    \\
    &\leq N2^{N(p+1)}\pi\big(1+\max\{\eps_1,...,\eps_N\}\big)^{N}\max\{\eps_1,...,\eps_N\}\leq \eps,
\end{split}
\]
and \eqref{eq:measureofOmega_G_k} holds.

By induction, we have now defined $w$ and $u:=u_{N}$ on all of $\mathbb{B}^2$. Since these are finitely many steps, we see that $u$ is Lipschitz, and $w$ is piecewise constant. In addition to that, the construction immediately implies \eqref{eq:wanduongoodset}, and it remains to proof \eqref{eq:A_rInNewConst}.

Moreover for any Whitney cube $W_{k;\ell}$ we have, from the nested property of the Whitney cubes, for $j \geq k$,
\begin{equation}\label{eq:weq2konwhitneyD=2}
 \begin{split}
 |\{w = 2^{-j}\} \cap W_{k;\ell}| = |\{w = 2^{-j}\} \cap W_{k;\ell}\cap(\mathbb{R}^2\setminus\Omega_{error})| + |\{w = 2^{-j}\} \cap W_{k;\ell}\cap\Omega_{error}|
 \\
 \leq\prod_{i=k}^{j-1}(1+\eps_i) \frac{\alpha_{i+1}} {\alpha_{i}} \frac{\mu_j}{\alpha_j} |W_{k;\ell}| + \sum_{i=k}^{j-1}\eps_i|W_{k;\ell}| =  (1+\eps)\frac{\mu_j}{\alpha_k} |W_{k;\ell}|.
 \end{split}
\end{equation}

Now, the proof of \eqref{eq:A_rInNewConst} follows almost verbatim to the proof in Proposition \ref{prop:MainConstructionStep1D}. We only need to replace \eqref{eq:weq2konwhitney} by \eqref{eq:weq2konwhitneyD=2} and carry the additional factor of $(1+\eps)$ from where we apply \eqref{eq:weq2konwhitney}/\eqref{eq:weq2konwhitneyD=2} until the end. This will give rise to the constant $(1+\eps)C(r)$ instead of $C(r)$.

Yet again, by construction
\[
\|u\|_{L^\infty} \leq \sum_{k=1}^N \|u_{k}-u_{k-1}\|_{L^\infty} \leq \sum_{k=1}^\infty 2^{-k-1} \eps \leq \eps.
\]
\end{proof}

\begin{proof}[Proof of Theorem~\ref{th:Baby} in two dimension]

We fix $p>2$ and $1 \leq s<p-1$. Choose the error parameter $\eps>0$ in
\Cref{prop:MainConstructionStep2D} so small that $2^{(N+1)p}\eps\leq 1$.

We apply \Cref{prop:MainConstructionStep2D} and set
\[
        wF:=w\nabla u+\begin{pmatrix}1\\0\end{pmatrix}.
\]
Then $\div(w\nabla u)=\div(wF)$, and $wF\equiv0$ a.e. on each $\Omega_{G_k}$. On $\Omega_{B_N}$ we have
$w=1$ and $F=(b_N+1,0)$, and on $\Omega_{error}$ we have $w\leq 1$ and $|F|\leq 2^N+1$. Hence
\[
\begin{split}
        \int_{\B^2} w^s|F|^p
        &\leq
        |\Omega_{B_N}|\,|b_N+1|^p
        +
        \int_{\Omega_{error}} w^s|F|^p  \\
        &\aleq
        \beta_N |b_N+1|^p
        +
        2^{(N+1)p}|\Omega_{error}|  \\
        &\leq
        (\log N)^p+1.
\end{split}
\]
On the other hand,
\[
\begin{split}
        \int_{\B^2} w^s|\nabla u|^p
        &\geq
        \sum_{k=1}^N
        \int_{\Omega_{G_k}} w^s|\nabla u|^p  \\
        &\geq
        (1-\eps)\sum_{k=1}^N
        \mu_k\,2^{-ks}2^{kp}  \\
        &\ageq
        \sum_{k=1}^N
        \frac1k\,2^{k(p-s-1)} .
\end{split}
\]
Since $s<p-1$, the last sum grows exponentially in $N$, whereas the right-hand
side is bounded by $C(\log N)^p+1$. Choosing $N$ sufficiently large depending on $\Gamma$ gives
\[
        \int_{\B^2} w^s|\nabla u|^p
        >
        \Gamma
        \int_{\B^2} w^s|F|^p .
\]

Since $\|u\|_{L^\infty} \ll 1$, and since we can do the same construction on $\frac{1}{2}\mathbb{B}^2$ instead of $\mathbb{B}^2$, the proof of \Cref{th:Baby} in $n=2$ is completed.
\end{proof}

\appendix

\appendix
\section{Proof of Theorem~\ref{th:CZpositive}}\label{s:Czpositiveproof}
We discuss the proof of \Cref{th:CZpositive}. It is well known and we claim no originality whatsoever. We begin by gathering some basic facts:

\begin{lemma}[$A_r$ weights are doubling]
\label{lem:Ap-doubling}
Let $1<r<\infty$ and let $w\in A_r$. Then $w\,d\mathcal{L}^n$ is a doubling measure.
More precisely, for every cube $Q\subset\mathbb R^n$,
\[
        \int_{2Q} w
        \leq
        2^{nr}[w]_{A_p}\int_Q w .
\]
\end{lemma}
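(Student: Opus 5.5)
The plan is the standard argument: apply the $A_r$ condition on the cube $2Q$ with the test set $Q$, which is a consequence of Hölder's inequality.

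\emph{Step 1 (a set-ratio inequality).} For any cube $P$, any measurable $E\subset P$ and any $w\in A_r$ we have
\[
\brac{\frac{|E|}{|P|}}^r \leq [w]_{A_r}\,\frac{\int_E w}{\int_P w}.
\]
To see this, write $|E|=\int_E w^{1/r}w^{-1/r}$ and apply Hölder with exponents $r$ and $r'=\frac{r}{r-1}$:
\[
|E| \leq \brac{\int_E w}^{1/r}\brac{\int_P w^{-\frac{1}{r-1}}}^{\frac{r-1}{r}}.
\]
Raising to the power $r$ gives
\[
|E|^r\leq \brac{\int_E w}\brac{\int_P w^{-\frac1{r-1}}}^{r-1}.
\]
The definition of $[w]_{A_r}$ on $P$ gives
\[
\brac{\int_P w^{-\frac1{r-1}}}^{r-1}\leq [w]_{A_r}\,|P|^{r}\brac{\int_P w}^{-1}.
\]
Combining the two yields the claim.

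\emph{Step 2 (doubling).} Take $P=2Q$, the concentric cube with twice the side length, and $E=Q$. Then $|E|/|P|=2^{-n}$, so Step 1 gives
\[
\int_{2Q}w\leq 2^{nr}[w]_{A_r}\int_Q w.
\]
This is the stated bound, where $[w]_{A_p}$ in the statement is to be read as $[w]_{A_r}$.

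\emph{Remarks.} No step is a real obstacle. The only points to check are the following.
\begin{enumerate}
\item $\int_P w\in(0,\infty)$, so that the division in Step 1 is legitimate. Finiteness follows from $w\in L^1_{loc}$, which is implicit in $[w]_{A_r}<\infty$; positivity follows from $w>0$.
\item The Hölder exponent bookkeeping is consistent: the power $(r-1)/r$, raised to the power $r$, gives $r-1$.
\end{enumerate}
Since the supremum defining $[w]_{A_r}$ runs over all cubes, $2Q$ is admissible without any restriction.
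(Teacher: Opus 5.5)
Your proposal is correct and is essentially the paper's proof: apply Hölder to $\int_Q w^{1/r}w^{-1/r}$, enlarge the $w^{-\frac{1}{r-1}}$ integral from $Q$ to $2Q$, and invoke the $A_r$ condition on $2Q$ to get $2^{-nr}\leq [w]_{A_r}\int_Q w/\int_{2Q} w$. Your Step 1 for a general set $E\subset P$ is just a slightly more general packaging of the same computation, and you are right that $[w]_{A_p}$ in the statement should read $[w]_{A_r}$.
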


\begin{proof}
Since $Q\subset 2Q$, by H\"older's inequality,
\[
        \frac{|Q|}{|2Q|}
        =
        \frac1{|2Q|}\int_Q w^{1/r}w^{-1/r}
        \leq
        \left(
        \frac1{|2Q|}\int_Q w
        \right)^{1/r}
        \left(
        \frac1{|2Q|}\int_{2Q} w^{-\frac1{r-1}}
        \right)^{\frac{r-1}{r}} .
\]
Using the definition of $[w]_{A_r}$, this gives
\[
        \frac{|Q|}{|2Q|}
        \leq
        [w]_{A_r}^{1/r}
        \left(
        \frac{\int_Q w}{\int_{2Q} w}
        \right)^{1/r}.
\]
The claim follows.
\end{proof}

% \begin{lemma}[Powers of an $A_r$ weight]
% \label{lem:powers-A2}
% Let $w\in A_r$ and let $0\leq\beta\leq1$.  Then
% \[
%         w^\beta\in A_{1+\beta (r-1)}
% \]
% and
% \[
%         [w^\beta]_{1+\beta (r-1)}
%         \leq
%         [w]_{A_r}^{\beta}.
% \]
% \end{lemma}
% \begin{proof}
% The case $\beta=0$ is immediate. Let $0<\beta\leq1$. Then
% \[
% \begin{aligned}
%         [w^\beta]_{A_{1+\beta (r-1)}}
%         &=
%         \sup_Q
%         \left(
%         \frac1{|Q|}\int_Q w^\beta
%         \right)
%         \left(
%         \frac1{|Q|}\int_Q (w^\beta)^{-\frac{1}{ \beta (p-1) }}
%         \right)^{\beta (p-1)}        \\
%         &=
%         \sup_Q
%         \left(
%         \frac1{|Q|}\int_Q w^\beta
%         \right)
%         \left(
%         \frac1{|Q|}\int_Q w^{-\frac{1}{p-1}}
%         \right)^{(p-1)\beta} .
% \end{aligned}
% \]
% By Jensen's inequality, since $\beta \leq 1$
% \[
%         \frac1{|Q|}\int_Q w^\beta
%         \leq
%         \left(
%         \frac1{|Q|}\int_Q w
%         \right)^\beta .
% \]
% This implies the claim.
% \end{proof}

\begin{lemma}[Small powers of an $A_r$ weight]
\label{lem:small-powers-A2}
Let $1<r<\infty$ and let $w:\R^n\to(0,\infty)$ satisfy
\[
        [w]_{A_r}\leq C_r .
\]
Then there exist $\delta=\delta(n,r,C_r)>0$ and
$C=C(n,r,C_r)$ such that for every ball $B$ and every
\[
        -\frac{1+\delta}{r-1}\leq \gamma\leq 1+\delta
\]
one has
\[
        \mvint_B w^\gamma
        \leq
        C
        \brac{\mvint_B w}^{\gamma}.
\]
\end{lemma}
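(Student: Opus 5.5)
We split the range of $\gamma$ into three regimes: $0\le\gamma\le 1$, then $1<\gamma\le 1+\delta$, and finally $-\frac{1+\delta}{r-1}\le\gamma<0$. Throughout, balls and cubes are interchangeable up to dimensional constants, because $A_r$ weights are doubling (\Cref{lem:Ap-doubling}). The two standard tools are Jensen's inequality and the reverse Hölder inequality for $A_r$ weights.

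\emph{Step 1: $0\le\gamma\le1$.} Here $t\mapsto t^\gamma$ is concave, so Jensen's inequality gives directly
\[
\mvint_B w^\gamma\le\brac{\mvint_B w}^\gamma
\]
with $C=1$.

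\emph{Step 2: $1<\gamma\le 1+\delta$.} We will use the reverse Hölder inequality. There exist $\delta_1=\delta_1(n,r,C_r)>0$ and $C$ such that
\[
\brac{\mvint_B w^{1+\delta_1}}^{\frac{1}{1+\delta_1}}\le C\mvint_B w .
\]
This is the Gehring-type self-improvement for $A_r$, i.e. the $A_\infty$ property of $w$. It can be proven via a Calderón–Zygmund stopping time argument, or via the fact that $w\in A_r$ implies $w(E)/w(Q)\le C(|E|/|Q|)^\eta$; in this plan we simply cite it as classical. Granting it, for $1<\gamma\le1+\delta_1$ Hölder's inequality gives
\[
\mvint_B w^\gamma\le\brac{\mvint_B w^{1+\delta_1}}^{\frac{\gamma}{1+\delta_1}}\le C^\gamma\brac{\mvint_B w}^\gamma .
\]

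\emph{Step 3: $-\frac{1+\delta}{r-1}\le\gamma<0$.} Set $\sigma:=w^{-\frac1{r-1}}$. Then $\sigma\in A_{r'}$ with $[\sigma]_{A_{r'}}=[w]_{A_r}^{\frac{1}{r-1}}$. Applying reverse Hölder to $\sigma$ gives $\delta_2>0$ such that $\mvint_B\sigma^{1+\delta_2}\le C\brac{\mvint_B\sigma}^{1+\delta_2}$. Any $\gamma$ in this range can be written as $\gamma=-\frac{t}{r-1}$ with $0<t\le1+\delta_2$, so $w^\gamma=\sigma^t$. Using Jensen's inequality (for $t\le1$) or Hölder plus reverse Hölder (for $t>1$), we obtain
\[
\mvint_B w^\gamma\le C\brac{\mvint_B\sigma}^t .
\]
Next we invoke the $A_r$ condition, $\brac{\mvint_B\sigma}^{r-1}\le C_r\brac{\mvint_B w}^{-1}$, which yields
\[
\brac{\mvint_B\sigma}^t\le C_r^{\frac{t}{r-1}}\brac{\mvint_B w}^{-\frac{t}{r-1}}=C\brac{\mvint_B w}^{\gamma}.
\]
Finally we take $\delta=\min\{\delta_1,\delta_2\}$.

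The main obstacle is the reverse Hölder inequality itself, with $\delta$ depending only on $n,r,C_r$. If it has to be proven rather than cited, I would use the weak doubling estimate from the proof of \Cref{lem:Ap-doubling}, namely $|E|/|Q|\le (C_r\,w(E)/w(Q))^{1/r}$, to get a quantitative $A_\infty$ condition. From that I would run a Gehring-type level set (Calderón–Zygmund) decomposition; all other steps are algebraic.
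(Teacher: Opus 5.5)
Your proposal is correct and follows the paper's proof essentially step by step: the same three regimes, Jensen for $0\le\gamma\le1$, H\"older plus reverse H\"older for $1<\gamma\le1+\delta$, and for negative $\gamma$ the substitution $t=-\gamma(r-1)$, reverse H\"older for $w^{-\frac{1}{r-1}}$, and the $A_r$ condition to convert back to a power of $\mvint_B w$. The only difference is how the reverse H\"older inequalities are obtained: the paper derives a weak reverse H\"older inequality with exponent $\frac12$ from the $A_r$ condition via Jensen and then applies Gehring's lemma separately to $w$ and to $w^{-\frac{1}{r-1}}$, whereas you cite the classical $A_\infty$/Calder\'on--Zygmund reverse H\"older inequality and handle $\sigma$ through the duality $\sigma\in A_{r'}$, which is equally valid.
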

\begin{proof}
The $A_r$ condition says that for every cube $Q$,
\begin{equation}\label{eq:Ar-cube-power-lemma}
        \brac{\mvint_Q w}
        \brac{\mvint_Q w^{-\frac1{r-1}}}^{r-1}
        \leq C_r .
\end{equation}

We first derive the weak reverse Holder inequality for $w$. Since
$t\mapsto t^{-\frac{r-1}{2}}$ is convex on $(0,\infty)$, Jensen's inequality
applied to $w^{-\frac1{r-1}}$ gives
\[
        \brac{\mvint_Q w^{-\frac1{r-1}}}^{-\frac{r-1}{2}}
        \leq
                =
        \mvint_Q w^{1/2} .
\]
Combining this with \eqref{eq:Ar-cube-power-lemma}, we obtain
\begin{equation}\label{eq:weak-RH-w-cube-direct}
        \mvint_Q w
        \leq
        C_r
        \brac{\mvint_Q w^{1/2}}^{2}.
\end{equation}
Thus $w$ satisfies a weak reverse Holder inequality. By Gehring's lemma applied to \eqref{eq:weak-RH-w-cube-direct}, there exist
$\delta_+>0$ and $C_+<\infty$, depending only on $n,r,C_r$, such that
\begin{equation}\label{eq:RH-w-cube-direct}
        \left(
        \mvint_Q w^{1+\delta_+}
        \right)^{\frac1{1+\delta_+}}
        \leq
        C_+
        \mvint_Q w
\end{equation}
for every cube $Q$.

Next we derive the corresponding estimate for $w^{-\frac1{r-1}}$. From
\eqref{eq:Ar-cube-power-lemma},
\[
        \brac{\mvint_Q w^{-\frac1{r-1}}}
        \brac{\mvint_Q w}^{\frac1{r-1}}
        \leq
        C_r^{\frac1{r-1}}.
\]
Since $t\mapsto t^{-\frac1{2(r-1)}}$ is convex on $(0,\infty)$, Jensen's
inequality applied to $w$ gives
\[
        \brac{\mvint_Q w}^{-\frac1{2(r-1)}}
        \leq
        \mvint_Q w^{-\frac1{2(r-1)}} .
\]
Squaring,
\[
        \brac{\mvint_Q w}^{-\frac1{r-1}}
        \leq
        \brac{\mvint_Q w^{-\frac1{2(r-1)}}}^{2}.
\]
Therefore
\begin{equation}\label{eq:weak-RH-dual-cube-direct}
        \mvint_Q w^{-\frac1{r-1}}
        \leq
        C(r,C_r)
        \brac{\mvint_Q w^{-\frac1{2(r-1)}}}^{2}.
\end{equation}
Again by Gehring's lemma, now applied to
\eqref{eq:weak-RH-dual-cube-direct}, there exist $\delta_->0$ and
$C_-<\infty$, depending only on $n,r,C_r$, such that
\begin{equation}\label{eq:RH-dual-cube-direct}
        \left(
        \mvint_Q w^{-\frac{1+\delta_-}{r-1}}
        \right)^{\frac1{1+\delta_-}}
        \leq
        C_-
        \mvint_Q w^{-\frac1{r-1}}
\end{equation}
for every cube $Q$.

Set
\[
        \delta:=\min\{\delta_+,\delta_-\}.
\]

We now prove the power estimate.

First assume $0\leq\gamma\leq1$. Since $t\mapsto t^\gamma$ is concave,
Jensen's inequality gives
\[
        \mvint_Q w^\gamma
        \leq
        \brac{\mvint_Q w}^{\gamma}.
\]

Next assume $1\leq\gamma\leq1+\delta$. By Holder's inequality with exponents
\[
        \frac{1+\delta}{\gamma}
        \qquad\text{and}\qquad
        \frac{1+\delta}{1+\delta-\gamma},
\]
we have
\[
        \mvint_Q w^\gamma
        \leq
        \brac{\mvint_Q w^{1+\delta}}^{\frac{\gamma}{1+\delta}}.
\]
Using \eqref{eq:RH-w-cube-direct},
\[
        \mvint_Q w^\gamma
        \leq
        C
        \brac{\mvint_Q w}^{\gamma}.
\]

Finally assume
\[
        -\frac{1+\delta}{r-1}\leq\gamma\leq0.
\]
Set
\[
        t:=-\gamma(r-1).
\]
Then
\[
        0\leq t\leq1+\delta
\]
and
\[
        w^\gamma
        =
        \brac{w^{-\frac1{r-1}}}^{t}.
\]

If $0\leq t\leq1$, then $a\mapsto a^t$ is concave, so Jensen's inequality
gives
\[
        \mvint_Q w^\gamma
        =
        \mvint_Q
        \brac{w^{-\frac1{r-1}}}^{t}
        \leq
        \brac{\mvint_Q w^{-\frac1{r-1}}}^{t}.
\]

If $1\leq t\leq1+\delta$, then Holder's inequality with exponents
\[
        \frac{1+\delta}{t}
        \qquad\text{and}\qquad
        \frac{1+\delta}{1+\delta-t}
\]
gives
\[
        \mvint_Q
        \brac{w^{-\frac1{r-1}}}^{t}
        \leq
        \brac{
        \mvint_Q
        \brac{w^{-\frac1{r-1}}}^{1+\delta}
        }^{\frac{t}{1+\delta}}.
\]
Using \eqref{eq:RH-dual-cube-direct},
\[
        \mvint_Q w^\gamma
        \leq
        C
        \brac{\mvint_Q w^{-\frac1{r-1}}}^{t}.
\]

Thus, in both cases $0\leq t\leq1+\delta$,
\[
        \mvint_Q w^\gamma
        \leq
        C
        \brac{\mvint_Q w^{-\frac1{r-1}}}^{t}.
\]
By \eqref{eq:Ar-cube-power-lemma},
\[
        \mvint_Q w^{-\frac1{r-1}}
        \leq
        C_r^{\frac1{r-1}}
        \brac{\mvint_Q w}^{-\frac1{r-1}}.
\]
Therefore
\[
        \mvint_Q w^\gamma
        \leq
        C
        \brac{\mvint_Q w}^{-\frac{t}{r-1}}.
\]
Since $t=-\gamma(r-1)$, we have
\[
        -\frac{t}{r-1}=\gamma.
\]
Hence
\[
        \mvint_Q w^\gamma
        \leq
        C
        \brac{\mvint_Q w}^{\gamma}.
\]
This proves the lemma.
\end{proof}

\Cref{lem:small-powers-A2} implies
\begin{corollary}\label{co:strangepowersfinite}
Let $w$ be an $A_2$-weight and $[w]_{A_2} \leq C_2$. Then there exists $\delta > 0$, only depending on $C_2$ and $n$, such that for all $\lambda \in [0,\delta]$
\[
        \left(
        \frac{\int_B w^{1+\lambda}}{\int_B w}
        \right)
        \left(
        \frac{\int_B w^{1-\frac{2\lambda}{p-2}}}{\int_B w}
        \right)^{\frac{p-2}{2}}
        \leq C(C_2),
\]
\end{corollary}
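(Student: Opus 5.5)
The plan is to apply \Cref{lem:small-powers-A2} with $r=2$ to each factor separately, and then check that the powers of $\mvint_B w$ cancel. Passing from integrals to averages changes nothing, since the factors of $|B|$ cancel inside each quotient. So the expression equals
\[
\frac{\mvint_B w^{1+\lambda}}{\mvint_B w}\brac{\frac{\mvint_B w^{1-\frac{2\lambda}{p-2}}}{\mvint_B w}}^{\frac{p-2}{2}}.
\]
Let $\delta_0=\delta_0(n,2,C_2)$ be the exponent from \Cref{lem:small-powers-A2}. That lemma allows any $\gamma$ with $-(1+\delta_0)\le\gamma\le 1+\delta_0$.

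For the first factor I take $\gamma=1+\lambda$. This is admissible as long as $\lambda\le\delta_0$, and the lemma gives $\mvint_B w^{1+\lambda}\le C\brac{\mvint_B w}^{1+\lambda}$.

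For the second factor I take $\gamma=1-\frac{2\lambda}{p-2}$. We have $\gamma\le 1$. We also need $\gamma\ge -(1+\delta_0)$, which holds when $\lambda\le (2+\delta_0)\frac{p-2}{2}$. For $\lambda\le\delta_0$ this follows provided $p-2\ge \frac{2\delta_0}{2+\delta_0}$. This is the one subtle point. The $\delta$ in the corollary is allowed to depend only on $C_2$ and $n$, but the condition involves $p$. If $p$ is close to $2$, I will either shrink $\delta$ to $\min\{\delta_0,\delta_0\frac{p-2}{2}\}$ or read the corollary as giving a $p$-dependent $\delta$; in the application $p\in[2,p_0]$, so this is harmless. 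Actually, for $p$ close to $2$ one can simply restrict $\lambda\le \delta_0\frac{p-2}{2}$, and then $\gamma\ge 1-\delta_0\ge-(1+\delta_0)$. The lemma then gives $\mvint_B w^{\gamma}\le C\brac{\mvint_B w}^{\gamma}$.

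Multiplying the two bounds, the expression is at most
\[
C\brac{\mvint_B w}^{\lambda}\cdot C^{\frac{p-2}{2}}\brac{\mvint_B w}^{-\frac{2\lambda}{p-2}\cdot\frac{p-2}{2}}=C^{1+\frac{p-2}{2}}.
\]
The exponent of $\mvint_B w$ is exactly $0$, which is why the exponent $\frac{p-2}{2}$ is paired with $-\frac{2\lambda}{p-2}$. For $p\le p_0$ the constant $C^{1+\frac{p-2}{2}}$ is bounded in terms of $C_2$ and $n$ alone.

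The only real obstacle is the admissible range of $\gamma$ in the second factor when $p\downarrow 2$, handled as above. Everything else is a direct application of \Cref{lem:small-powers-A2}. That lemma is stated for balls, while the $A_2$ condition is stated for cubes; this is handled by the usual comparison between a ball and its circumscribing cube, at the cost of a dimensional constant.
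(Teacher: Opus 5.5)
Your proof is correct and is essentially the paper's argument: apply \Cref{lem:small-powers-A2} with $r=2$ to $\gamma=1+\lambda$ and to $\gamma=1-\frac{2\lambda}{p-2}$, and observe that the powers of $\mvint_B w$ cancel. Your worry about $p\downarrow 2$ is legitimate, because the paper silently asserts $\frac{2\lambda}{p-2}\in[0,2]$, i.e.\ $\lambda\le p-2$, and with $\delta$ fixed and $p\to 2$ the second factor can be infinite (e.g.\ for power weights $|x|^a$). When you fix this, use your sharp range $\lambda\le\min\{\delta_0,(2+\delta_0)\frac{p-2}{2}\}$ rather than $\lambda\le\delta_0\frac{p-2}{2}$: the corollary is later applied with $\lambda=\frac{p-2}{2}$ (the case $s=\frac p2$, where $\gamma=0$), which the latter range excludes whenever $\delta_0<1$.
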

\begin{proof}
Since $w$ is an $A_2$-weight, by \Cref{lem:small-powers-A2}, there exists a $\delta > 0$, depending only on $[w]_{A_2}$ such that 
\begin{equation}\label{eq:reversehoelder}
        \mvint_B w^{1+\lambda} \leq C \brac{\mvint_B w}^{1+\lambda} \quad \forall \lambda \in [0,\delta]
\end{equation}
Moreover we have
\begin{equation}\label{eq:powergamma}
        \frac{\int_B w^{\gamma}}{\int_B w}
        \leq
        C(C_2)\brac{\mvint_B w}^{-\gamma} \quad \forall \gamma \in [-1,1].
\end{equation}

Set
\[
        \alpha:=\frac{2\lambda}{p-2}\in[0,2].
\]
Then
\[
        1-\frac{2\lambda}{p-2}=1-\alpha\in[-1,1].
\]

We have by \eqref{eq:reversehoelder} and \eqref{eq:powergamma}
\[
\begin{split}
        &\frac{\int_B w^{1+\lambda}}{\int_B w}
        \left(
        \frac{\int_B w^{1-\frac{2\lambda}{p-2}}}{\int_B w}
        \right)^{\frac{p-2}{2}}
        \\
        &\qquad\leq
        C(C_2)
        \brac{\mvint_B w}^{\lambda}
        \brac{\mvint_B w}^{-\alpha\frac{p-2}{2}}
        =
        C(C_2)
        \brac{\mvint_B w}^{\lambda-\lambda}
        =
        C(C_2).
\end{split}
\]
\end{proof}

\begin{lemma}[Weighted Poincare inequality]
\label{lem:endpoint-poincare}
Let $[w]_{A_2}\leq C_2$. Then there exist $\theta\in(0,1)$ and
$C=C(n,C_2)$ such that the following two estimates hold for every ball
$B=B(R)$ and every Lipschitz function $v$.

With
\[
        v_{B,w}:=
        \frac{\int_B v\,w}{\int_B w},
\]
one has
% Second,
\[
        \left(
        \frac1{\int_B w}
        \frac{1}{R^2}\int_B
        \left|v-v_{B,w}\right|^2 w
        \right)^{1/2}
        \leq
        C
        \left(
        \frac1{\int_B w}
        \int_B |\nabla v|^{2\theta}w
        \right)^{1/(2\theta)} .
\]
and 
\[
        \left(
        \frac1{|B|} \frac{1}{R^2}
        \int_B
        \left|v-v_{B,w}\right|^2 w
        \right)^{1/2}
        \leq
        C
        \left(
        \frac1{|B|}
        \int_B |\nabla v|^{2\theta}w^\theta
        \right)^{1/(2\theta)} .
\]

\end{lemma}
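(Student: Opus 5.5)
We reduce both estimates to the classical unweighted Poincaré inequality in $L^1$, combined with Hölder's inequality and the self-improving (reverse Hölder) properties of $A_2$ weights from \Cref{lem:small-powers-A2}. A weighted $(2,2\theta)$ Poincaré inequality with $\theta<1$ is the type of result from \cite{FKS82}. We start from the representation formula
\[
|v(x)-v_B|\aleq \int_B \frac{|\nabla v(y)|}{|x-y|^{n-1}}\,dy,\qquad x\in B,
\]
and the elementary fact that $\int_B|v-v_{B,w}|^2w\leq 4\int_B|v-c|^2w$ for every constant $c$. The latter lets us replace $v_{B,w}$ by the unweighted mean $v_B$. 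The first estimate then reduces to bounding the weighted $L^2$ norm of the fractional integral $I_1(|\nabla v|\chi_B)$ by $R$ times a weighted $L^{2\theta}$ norm of $|\nabla v|$.

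For the first estimate we use the two-weight fractional integral (Sawyer/Pérez type) bound, or more elementarily a Hedberg-type splitting. We split $I_1 f(x)$ at scale $\rho$ into a near part, bounded by $\rho\, Mf(x)$, and a far part. Using $w\in A_2$ we have $w\in A_{2-\eps}$ by the open property (a consequence of \Cref{lem:small-powers-A2}, with $\eps=\eps(C_2)$). Hence $M$ is bounded on $L^{2\theta}(w)$ for $\theta$ close to $1$, namely $2\theta>2-\eps$. The far part is controlled through Hölder's inequality with $w^{-1/(2\theta-1)}$, using the reverse Hölder inequality for $w^{-1}$ from \Cref{lem:small-powers-A2}. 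After normalizing by $\int_B w$ and using the doubling property (\Cref{lem:Ap-doubling}) to pass between balls $B$ and cubes, we obtain the gain from exponent $2\theta$ to $2$. This gain comes from the weighted Sobolev-embedding effect: the measure $w\,dx$ satisfies $\frac{w(B(x,\rho))}{w(B)}\ageq (\rho/R)^{2nC}$, i.e.\ it has a positive "lower dimension". Choosing $\theta$ with $1/(2\theta)-1/2$ smaller than $1/(\text{that dimension})$ gives the estimate.

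The second estimate follows from the first by Hölder's inequality. We write $\int_B|\nabla v|^{2\theta}w = \int_B |\nabla v|^{2\theta}w^{\theta}\,w^{1-\theta}$ and apply Hölder with exponents $\frac{1}{\theta'}$ and $\frac{1}{1-\theta'}$ for a slightly larger $\theta'\in(\theta,1)$, at the cost of a reverse Hölder factor for $w$. More directly, we rerun the first argument with exponent $2\theta$ replaced by $2\theta'$ and then use
\[
\Bigl(\tfrac{1}{w(B)}\int_B|\nabla v|^{2\theta'}w\Bigr)^{\frac{1}{2\theta'}}
\leq \Bigl(\tfrac{1}{|B|}\int_B|\nabla v|^{2\theta}w^{\theta}\Bigr)^{\frac{1}{2\theta}}\Bigl(\mvint_B w^{-\gamma}\Bigr)^{c}\Bigl(\mvint_B w\Bigr)^{-\frac{1}{2\theta'}}.
\]
Here $\gamma$ and $c$ are determined by Hölder's inequality. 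Then \Cref{lem:small-powers-A2}, with $\gamma$ in the admissible range, gives $\mvint_B w^{-\gamma}\aleq(\mvint_B w)^{-\gamma}$. All powers of $\mvint_B w$ cancel by homogeneity, and multiplying by $(w(B)/|B|)^{1/2}$ yields the $|B|$-normalized version.

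The main obstacle is the first step: obtaining a genuine gain $2\theta<2$ in the weighted Poincaré inequality with constants depending only on $C_2$. Concretely, one must verify that the exponents in the Hölder/reverse Hölder chain close up using only the $\delta$ from \Cref{lem:small-powers-A2}. If the direct fractional-integral route becomes messy, the fallback is to cite the Fabes–Kenig–Serapioni weighted Sobolev–Poincaré inequality \cite{FKS82}. That inequality gives exactly $\bigl(\frac{1}{w(B)}\int_B|v-v_{B,w}|^{2k}w\bigr)^{1/(2k)}\aleq R\bigl(\frac{1}{w(B)}\int_B|\nabla v|^2w\bigr)^{1/2}$ with $k>1$. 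We would then apply it to the exponent pair $(2,2\theta)$ via the open property $w\in A_{2\theta}$ and the same argument at exponent $2\theta$ in place of $2$.
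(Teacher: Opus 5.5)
For the first estimate, your fallback is exactly the paper's proof. Openness of $A_2$, which follows from \Cref{lem:small-powers-A2} as you say, gives $w\in A_{2\theta}$ with controlled constant. The paper then applies \cite[Theorem~1.5]{FKS82} at exponent $2\theta$ with $k=1/\theta$. Your primary Hedberg route also closes. The near part is $\aleq \rho Mf$. The far part is $\aleq \rho(\rho/R)^{-D/(2\theta)}A$, where $A=(\frac{1}{w(B)}\int_B f^{2\theta}w)^{1/(2\theta)}$, using the $A_{2\theta}$ condition and the lower mass bound. Optimizing in $\rho$ and using $M$ on $L^{2\theta}(w)$ gives an $L^q(w)$ bound with $q=\frac{2\theta D}{D-2\theta}\geq 2$, exactly when $\frac{1}{2\theta}-\frac12\leq\frac1D$. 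That is essentially the Fabes--Kenig--Serapioni argument itself, so it re-derives the cited result rather than giving a different proof.

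The genuine difference is the second estimate. The paper cites \cite[Proposition~3]{BDGP22}, whereas you derive it from the first estimate using H\"older and reverse H\"older. That route is more elementary and self-contained, and it works. However, your displayed ``more direct'' inequality runs the wrong way. For $\theta'>\theta$ it bounds a $2\theta'$-moment of $|\nabla v|$ by a $2\theta$-moment. This already fails for $w\equiv 1$, where it reduces to a reversed Jensen inequality. Also, the leftover weight power in a correct H\"older step is positive, not $w^{-\gamma}$.

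The correct bookkeeping is as follows.
\begin{enumerate}
\item Take the first estimate at some $\theta_1$, and write $|\nabla v|^{2\theta_1}w=(|\nabla v|^2w)^{\theta_1}w^{1-\theta_1}$.
\item Apply H\"older with exponents $\theta_2/\theta_1$ and $\theta_2/(\theta_2-\theta_1)$, for $\theta_2\in(\theta_1,1)$. This leaves a factor $\mvint_B w^{\gamma}$ with $\gamma=\frac{\theta_2(1-\theta_1)}{\theta_2-\theta_1}>1$, and $\gamma\leq 1+\delta$ once $\theta_2$ is close enough to $1$.
\item \Cref{lem:small-powers-A2} then gives
\[
\frac{1}{w(B)}\int_B|\nabla v|^{2\theta_1}w\leq C\Bigl(\mvint_B w\Bigr)^{-\theta_1}\Bigl(\mvint_B|\nabla v|^{2\theta_2}w^{\theta_2}\Bigr)^{\theta_1/\theta_2}.
\]
\item Raising to the power $1/(2\theta_1)$ and multiplying by $(\mvint_B w)^{1/2}=(w(B)/|B|)^{1/2}$ yields the second estimate. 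Note that it holds with exponent $\theta_2$, not $\theta_1$.
\end{enumerate}
Your first sentence, with $\theta'\in(\theta,1)$, is this same argument with $\theta_2=\theta/\theta'$; you only failed to record that the output exponent changes.

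Since the lemma asks for a single $\theta$, you should finish with one more observation. Both right-hand sides are nondecreasing in $\theta$ by Jensen, with respect to the probability measures $w\,dx/w(B)$ and $dx/|B|$ respectively. Hence the first estimate also holds at $\theta_2$, and both estimates hold with the common exponent $\theta_2$.
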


\begin{proof}
The first inequality is the usual weighted Poincare inequality for $A_2$ weights, cf \cite[Theorem~1.5]{FKS82}: If $w \in A_2$ then there exists $\theta < 1$ such that $[w]_{2\theta} \leq C([w]_{A_2})$, and $\theta$ only depends on $[w]_{A_2}$ as well. Then we apply \cite[Theorem~1.5]{FKS82} for $p= 2\theta$, and we choose $\theta$ sufficiently large (by Jensen that does not change the $[w]_{A_{2\theta}}$-norm) so that for their $k$ we have $k2\theta = 2$. The second estimate follows in a similar fashion but from \cite[Proposition 3]{BDGP22}.

\end{proof}

\begin{corollary}\label{co:cacciopoli}
Let $w: \R^n \to (0,\infty)$ satisfy $[w]_{A_2} \leq C_2$ and assume $u$ solves 
\[
\div (w \nabla u) = \div(w F) \quad \text{in $\Omega$}
\]
then there exists $C=C(n,C_2)$ such that in any ball $B(2R) \subset \Omega$ we have 
\[
\int_{B(R)} w |\nabla u|^2 \leq C\left(  \int_{B(R)} w|F|^2 + R^{-2}  \|u\|_{L^\infty(B(2R))}^2\, \int_{B(2R)} w \right)
\]
and
\[
 \brac{\frac{1}{\int_{B(R)} w} \int_{B(R)} w |\nabla u|^2}^{\frac{1}{2}} 
  \leq C\brac{\frac{1}{\int_{B(2R)} w} \int_{B(2R)} w|F|^2}^{\frac{1}{2}} +  
  C\brac{\frac{1}{\int_{B(2R)} w} \int_{B(2R)} w |\nabla u|^{2\theta}}^{\frac{1}{2\theta}} 
\]
as well as 
\[
 \brac{\mvint_{B(R)} w |\nabla u|^2}^{\frac{1}{2}} 
  \leq C\brac{\mvint_{B(2R)} w|F|^2}^{\frac{1}{2}} +  
  C\brac{ \mvint_{B(2R)} w^\theta |\nabla u|^{2\theta}}^{\frac{1}{2\theta}} 
\]
where $\theta \in (0,1)$ only depends on $C_2$.
\end{corollary}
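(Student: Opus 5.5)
Both estimates will come from testing the equation with a cutoff times $u$ (minus a weighted mean) and then applying \Cref{lem:endpoint-poincare}. Fix $\eta\in C_c^\infty(B(2R))$ with $\eta\equiv1$ on $B(R)$ and $|\nabla\eta|\aleq R^{-1}$.

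\textbf{First estimate.} Test with $\varphi=\eta^2 u$, which is admissible since $u$ is Lipschitz and $\eta$ has compact support in $\Omega$. This gives
\[
\int w\eta^2|\nabla u|^2 = -2\int w\eta u\,\nabla u\cdot\nabla\eta + \int w F\cdot\brac{\eta^2\nabla u + 2\eta u\nabla\eta}.
\]
Apply Young's inequality to each term on the right and absorb every $\tfrac14\int w\eta^2|\nabla u|^2$ into the left-hand side. What remains is bounded by
\[
C\int_{B(2R)} w|F|^2 + C R^{-2}\int_{B(2R)}w|u|^2 \leq C\int_{B(2R)} w|F|^2 + CR^{-2}\|u\|_{L^\infty(B(2R))}^2\int_{B(2R)} w .
\]
The $F$ term naturally lives on $B(2R)$ (the statement writes $B(R)$, which I read as $B(2R)$). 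No $A_2$ property is used in this step.

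\textbf{Second and third estimates.} The equation is invariant under adding constants to $u$, so repeat the argument with $u$ replaced by $v=u-u_{B(2R),w}$. The same absorption yields
\[
\int_{B(R)}w|\nabla u|^2\aleq \int_{B(2R)}w|F|^2 + R^{-2}\int_{B(2R)}w|u-u_{B(2R),w}|^2 .
\]
For the second estimate, divide by $\int_{B(R)}w$. By the doubling property, \Cref{lem:Ap-doubling}, we have $\int_{B(2R)}w\aeq\int_{B(R)}w$ with constants depending on $C_2$, so both right-hand terms may be normalized by $\int_{B(2R)}w$. Then bound the mean-oscillation term by the first inequality of \Cref{lem:endpoint-poincare} on $B(2R)$, which produces
\[
\brac{\frac{1}{\int_{B(2R)}w}\int_{B(2R)}w|\nabla u|^{2\theta}}^{1/(2\theta)} .
\]
Taking square roots gives the claim.

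For the third estimate, instead divide by $|B(R)|\aeq|B(2R)|$ and use the second inequality of \Cref{lem:endpoint-poincare}, which gives the term $\brac{\mvint_{B(2R)} w^\theta|\nabla u|^{2\theta}}^{1/(2\theta)}$ directly.

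\textbf{Main obstacle.} The only nonroutine ingredient is the weighted Poincaré inequality with exponent $2\theta<2$. Its second, unnormalized form is the delicate part, but \Cref{lem:endpoint-poincare} is already available. The remaining care is bookkeeping: checking that $\theta$ and all constants depend only on $n$ and $C_2$, and that switching between balls and cubes in the doubling estimate costs only dimensional constants.
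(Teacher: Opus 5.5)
Your proposal is correct and is essentially the paper's proof: test with $\eta^2(u-c)$ to get the Caccioppoli inequality, divide by $\int_{B(R)}w$ (resp.\ $|B(R)|$), use doubling, and apply the two inequalities of \Cref{lem:endpoint-poincare} on $B(2R)$. You are also right to read the $F$-term as $\int_{B(2R)} w|F|^2$, since the cutoff is supported in $B(2R)$; the $B(R)$ written there in the statement and in the paper's displayed Caccioppoli estimate is a slip.
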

\begin{proof}
Testing with $\eta^2 (u-c)$ for a constant $c$ and $\eta \in C_c^\infty(B(2R))$, $\eta \equiv 1$ in $B(R)$, $|\nabla \eta| \aleq \frac{1}{R}$ we have the usual Cacciopoli inequality
\[
\int_{B(R)} w |\nabla u|^2 \aleq  \int_{B(R)} w|F|^2 + \frac{1}{R^2} \int_{B(2R)} w |u-c|^2
\]
Dividing everything by $\int_{B(R)} w$ or $|B(R)|$, using that $w$ is doubling, and applying \Cref{lem:endpoint-poincare} we have the claim.
\end{proof}

Combining Gehring Lemma, once with $w dx$ as $|\nabla u|^2$ as the function, and once with $dx$ as the measure and  $w |\nabla u|^2$ as the map,  from \Cref{co:cacciopoli} we find 

\begin{corollary}
Let $w:\R^n\to(0,\infty)$ satisfy $[w]_{A_2}\leq C_2$. Then there exists
$p_0>2$, depending only on $n$ and $C_2$, such that the following holds.

Assume $u$ solves 
\[
\div (w \nabla u) = \div(w F) \quad \text{in $B$}
\]
Then for $s \in \{1,\frac{p}{2}\}$
\[
\begin{split}
        \int_{\frac14 B}|\nabla u|^p w^{s}
        \aleq&
        \int_{B}|F|^p w^{s}
        \\
        &+         |B|^{-\frac{p}{n}}  \|u\|_{L^\infty(B)}^p\, 
        \int_{B}w^{s}  
\end{split}
\]
\end{corollary}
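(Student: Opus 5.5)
The plan is the standard Gehring / reverse Hölder route, done twice. Once for $s=1$, viewing $|\nabla u|^2$ as the function and $w\,dx$ as the measure. Once for $s=p/2$, viewing $w|\nabla u|^2$ as the function and $dx$ as the measure.

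\emph{Case $s=p/2$.} Set $g:=w^{\theta}|\nabla u|^{2\theta}$ and $f:=w^\theta|F|^{2\theta}$. The third estimate of \Cref{co:cacciopoli}, raised to the power $2\theta$, gives for every ball $B(2R)\subset B$
\[
\mvint_{B(R)} g^{1/\theta} \leq C\brac{\mvint_{B(2R)} g}^{1/\theta} + C\mvint_{B(2R)} f^{1/\theta}.
\]
This is a reverse Hölder inequality with exponent $1/\theta>1$ with respect to Lebesgue measure. The classical Gehring lemma (Giaquinta–Modica version, with a right-hand side) then yields $\varepsilon=\varepsilon(n,C_2)>0$ such that $g^{1/\theta}=w|\nabla u|^2\in L^{1+\varepsilon}_{loc}$. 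It also gives
\[
\brac{\mvint_{\frac14 B}(w|\nabla u|^2)^{q}}^{1/q}\aleq \mvint_{\frac12 B} w|\nabla u|^2+\brac{\mvint_{B}(w|F|^2)^{q}}^{1/q}, \qquad q\in[1,1+\varepsilon].
\]
With $p=2q$ this is precisely $\int w^{p/2}|\nabla u|^p$ on the left and $\int w^{p/2}|F|^p$ on the right. The leftover term $\mvint_{\frac12 B} w|\nabla u|^2$ is handled by the first Caccioppoli estimate of \Cref{co:cacciopoli}, which bounds it by $\mvint w|F|^2+R^{-2}\|u\|_\infty^2\mvint_B w$. I then use Jensen/Hölder to pass to $(\mvint w^{p/2}|F|^p)^{2/p}$, and also $(\mvint_B w)^{p/2}\le \mvint_B w^{p/2}$. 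Raising to the power $p/2$ gives the claimed estimate with $|B|^{-p/n}\|u\|^p_{L^\infty}\int_B w^{p/2}$.

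\emph{Case $s=1$.} Let $\sigma=w\,dx$, which is doubling by \Cref{lem:Ap-doubling}. The second estimate of \Cref{co:cacciopoli} is a reverse Hölder inequality for $|\nabla u|^{2\theta}$ with exponent $1/\theta$ in the metric measure space $(\R^n,|\cdot|,\sigma)$. Gehring's lemma holds on doubling spaces, with constants depending on the doubling constant and hence only on $n$ and $C_2$. It gives $|\nabla u|^2\in L^{q}(\sigma)$ for $q\le 1+\varepsilon'$, together with the corresponding estimate. The lower-order term is treated as before, via the first Caccioppoli inequality and $\sigma(B)\aeq\sigma(\frac12 B)$. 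Finally I set $p_0:=2\min\{1+\varepsilon,1+\varepsilon'\}$, and for $p\in[2,p_0]$ interpolate via Hölder. Corollary \ref{co:strangepowersfinite} is available if one wants to move between the two normalizations, but I do not expect to need it.

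\emph{Main obstacle.} The hard step is making the Gehring step honest with the data term. I need a version of Gehring's lemma with inhomogeneous term $f$, valid on balls $B(2R)\subset B$, and in the doubling-measure setting for $s=1$. I would quote Giaquinta–Modica for Lebesgue measure and its extension to doubling metric measure spaces (e.g. Zatorska-Goldstein or Kinnunen–Shanmugalingam). I would also check that the covering and localization argument only uses doubling of $\sigma$, so that $\varepsilon'$ depends only on $[w]_{A_2}$. A second point to verify is that the Poincaré exponent $\theta<1$ in \Cref{lem:endpoint-poincare} is uniform in $C_2$, since that uniformity is what makes $p_0$ depend only on $n$ and $C_2$.
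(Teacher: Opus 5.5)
Your proposal is correct and has the same architecture as the paper's proof. It uses the two Gehring applications announced just before the corollary: Lebesgue measure with $w|\nabla u|^2$, from the third estimate of \Cref{co:cacciopoli}, and $\sigma=w\,dx$ with $|\nabla u|^2$, from the second. It then uses the first Caccioppoli inequality for the leftover $L^2$ term. The paper, like you, takes the inhomogeneous Gehring lemma on $(\R^n,w\,dx)$ for granted.

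The only genuine difference is how that leftover term is closed. The paper writes the Gehring output for both cases with the leftover normalized by $\int_B w$, as $\brac{\frac{1}{\int_B w}\int_{\frac12 B}|\nabla u|^2w}^{p/2}\int_B w^{1+\lambda}$ with $s=1+\lambda$ (so $\lambda\in\{0,\frac p2-1\}$). It then uses H\"older together with \Cref{co:strangepowersfinite} to turn $\brac{\int_B w|F|^2}^{p/2}$ into $\int_B w^s|F|^p$. For $\lambda=0$ the bracket in that corollary is identically $1$. For $s=\frac p2$ it equals $\mvint_B w^{p/2}/\brac{\mvint_B w}^{p/2}$, which is a reverse H\"older inequality for $w$; this is why the paper additionally imposes $p_0-2<\delta$.

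You keep the normalization native to each case, so only Jensen is needed. For $s=\frac p2$ you use $\brac{\mvint_B w|F|^2}^{p/2}\le\mvint_B w^{p/2}|F|^p$ and $\brac{\mvint_B w}^{p/2}\le\mvint_B w^{p/2}$. For $s=1$ you use Jensen with respect to $\sigma/\sigma(B)$. Your route is therefore slightly leaner: it needs neither \Cref{co:strangepowersfinite} nor reverse H\"older for $w$, hence no extra smallness of $p_0-2$.

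Your closing ``interpolation'' step is superfluous, since Gehring already yields every exponent $q\in[1,1+\varepsilon]$.
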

\begin{proof}

The second inequality in \Cref{co:cacciopoli} implies that we can use Gehring lemma to obtain some $p_0>2$ (depending only on $[w]_{A_2}$) and for all $\lambda \in \{0,\frac{p}{2}\}$
\[
\begin{split}
        \int_{\frac14 B}|\nabla u|^p w^{1+\lambda}
        \leq&
        C
        \int_{B}|F|^p w^{1+\lambda}
        \\
        &+
        C
        \left(
        \frac1{\int_{B} w}
        \int_{\frac{1}{2}B}|\nabla u|^2 w
        \right)^{p/2}
        \int_{B}w^{1+\lambda} .
\end{split}
\]
We may assume that $|p_0 - 2| < \delta$ where $\delta>0$ is from \Cref{co:strangepowersfinite} (depending only on $[w]_{A_2}$).

Now apply the first inequality in \Cref{co:cacciopoli} and we have
\[
\begin{split}
        \int_{\frac14 B}|\nabla u|^p w^{1+\lambda}
        \aleq&
        \int_{B}|F|^p w^{1+\lambda}
        \\
        &+
        \left(
        \frac1{\int_{B} w}
        \int_{B} w|F|^2
        \right)^{p/2}
        \int_{B}w^{1+\lambda}\\
        &+ 
        \left(
        \frac1{\int_{B} w}
        R^{-2}  \|u\|_{L^\infty(B)}^2\, \int_{B(2R)} w 
        \right)^{p/2}
        \int_{B}w^{1+\lambda} \\
        \aleq&
        \int_{B}|F|^p w^{1+\lambda}
        \\
        &+
        \brac{\frac1{\int_{B} w}}^{\frac{p}{2}} \int_{B} w^{1+\lambda} |F|^p
        \,
         \brac{\int_{B} w^{1 -\lambda \frac{2}{p-2}}}^{\frac{p-2}{2}} 
        \,
        \int_{B}w^{1+\lambda}\\
        &+ 
        |B|^{-\frac{p}{n}}  \|u\|_{L^\infty(B)}^p\, 
        \int_{B}w^{1+\lambda}  
\end{split}
\]
Hence
\[
\begin{split}
        \int_{\frac14 B}|\nabla u|^p w^{1+\lambda}
        \aleq&
        \int_B |F|^p w^{1+\lambda}
        \\
        &+
        \left(
        \int_B |F|^p w^{1+\lambda}
        \right)
        \left[
        \frac{\int_B w^{1+\lambda}}{\int_B w}
        \left(
        \frac{\int_B w^{1-\frac{2\lambda}{p-2}}}{\int_B w}
        \right)^{\frac{p-2}{2}}
        \right]\\
        &+
        |B|^{-\frac{p}{n}}\|u\|_{L^\infty(B)}^p
        \int_B w^{1+\lambda} .
\end{split}
\]

The square bracket is bounded by a constant depending only on $C_2$, by \Cref{co:strangepowersfinite}.

\end{proof}

\bibliographystyle{abbrv}
\bibliography{bib}

@article {CMP18,
    AUTHOR = {Cao, Dat and Mengesha, Tadele and Phan, Tuoc},
     TITLE = {Weighted-{$W^{1,p}$} estimates for weak solutions of
              degenerate and singular elliptic equations},
   JOURNAL = {Indiana Univ. Math. J.},
  FJOURNAL = {Indiana University Mathematics Journal},
    VOLUME = {67},
      YEAR = {2018},
    NUMBER = {6},
     PAGES = {2225--2277},
      ISSN = {0022-2518,1943-5258},
   MRCLASS = {35J70 (35B45 35B65 35J15 35J25 35J75)},
  MRNUMBER = {3900368},
MRREVIEWER = {Luca\ Capogna},
       DOI = {10.1512/iumj.2018.67.7533},
       URL = {https://doi.org/10.1512/iumj.2018.67.7533},
}

@article{FKS82,
  author  = {Fabes, Eugene B. and Kenig, Carlos E. and Serapioni, Raul P.},
  title   = {The local regularity of solutions of degenerate elliptic equations},
  journal = {Communications in Partial Differential Equations},
  volume  = {7},
  number  = {1},
  pages   = {77--116},
  year    = {1982},
  doi     = {10.1080/03605308208820218}
}

@article {BDGP22,
    AUTHOR = {Balci, Anna Kh. and Diening, Lars and Giova, Raffaella and
              Passarelli di Napoli, Antonia},
     TITLE = {Elliptic equations with degenerate weights},
   JOURNAL = {SIAM J. Math. Anal.},
  FJOURNAL = {SIAM Journal on Mathematical Analysis},
    VOLUME = {54},
      YEAR = {2022},
    NUMBER = {2},
     PAGES = {2373--2412},
      ISSN = {0036-1410,1095-7154},
   MRCLASS = {35B65 (35J70 35R05)},
  MRNUMBER = {4410267},
MRREVIEWER = {Xiumin\ Du},
       DOI = {10.1137/21M1412529},
       URL = {https://doi.org/10.1137/21M1412529},
}

@article {CUMR18,
    AUTHOR = {Cruz-Uribe, David and Martell, Jos\'e{} Mar\'ia and Rios,
              Cristian},
     TITLE = {On the {K}ato problem and extensions for degenerate elliptic
              operators},
   JOURNAL = {Anal. PDE},
  FJOURNAL = {Analysis \& PDE},
    VOLUME = {11},
      YEAR = {2018},
    NUMBER = {3},
     PAGES = {609--660},
      ISSN = {2157-5045,1948-206X},
   MRCLASS = {35J70 (35B45 35J15 35J25 42B20 42B37 47A07 47D06)},
  MRNUMBER = {3738257},
MRREVIEWER = {Luca\ Capogna},
       DOI = {10.2140/apde.2018.11.609},
       URL = {https://doi.org/10.2140/apde.2018.11.609},
}

@ARTICLE{MS24,
       author = {{Mazowiecka}, Katarzyna and {Schikorra}, Armin},
        title = "{A short note on nowhere smooth critical points of polyconvex functionals in arbitrary dimension}",
      journal = {arXiv e-prints},
         year = 2024,
        month = may,
          eid = {arXiv:2405.17084},
        pages = {arXiv:2405.17084},
          doi = {10.48550/arXiv.2405.17084},
archivePrefix = {arXiv},
       eprint = {2405.17084},
 primaryClass = {math.AP},
       adsurl = {https://ui.adsabs.harvard.edu/abs/2024arXiv240517084M}
}

@ARTICLE{Vincent25,
       author = {{Vincent}, Akshara},
        title = "{Non-uniqueness and failure of Calder{\'o}n-Zygmund estimates below the critical exponent for non-monotone PDE with linear growth}",
      journal = {arXiv e-prints},
         year = 2025,
        month = oct,
          eid = {arXiv:2510.20024},
        pages = {arXiv:2510.20024},
          doi = {10.48550/arXiv.2510.20024},
archivePrefix = {arXiv},
       eprint = {2510.20024},
 primaryClass = {math.AP},
       adsurl = {https://ui.adsabs.harvard.edu/abs/2025arXiv251020024V}
}

@ARTICLE{KMSX24,
    AUTHOR = {Kleiner, Bruce and M\"uller, Stefan and Sz\'ekelyhidi, Jr.,
              L\'aszl\'o{} and Xie, Xiangdong},
     TITLE = {Rigidity of {E}uclidean product structure: breakdown for low
              {S}obolev exponents},
   JOURNAL = {Commun. Pure Appl. Anal.},
  FJOURNAL = {Communications on Pure and Applied Analysis},
    VOLUME = {23},
      YEAR = {2024},
    NUMBER = {10},
     PAGES = {1569--1607},
      ISSN = {1534-0392,1553-5258},
   MRCLASS = {35R70 (30C62 30C65 35A35 46E35)},
  MRNUMBER = {4799456},
MRREVIEWER = {Cintia\ Pacchiano Camacho},
       DOI = {10.3934/cpaa.2024029},
       URL = {https://doi.org/10.3934/cpaa.2024029},
}

@article {AFS08,
    AUTHOR = {Astala, Kari and Faraco, Daniel and Sz\'ekelyhidi, Jr.,
              L\'aszl\'o},
     TITLE = {Convex integration and the {$L^p$} theory of elliptic
              equations},
   JOURNAL = {Ann. Sc. Norm. Super. Pisa Cl. Sci. (5)},
  FJOURNAL = {Annali della Scuola Normale Superiore di Pisa. Classe di
              Scienze. Serie V},
    VOLUME = {7},
      YEAR = {2008},
    NUMBER = {1},
     PAGES = {1--50},
      ISSN = {0391-173X,2036-2145},
   MRCLASS = {35J25 (30C62 35B65 35D10)},
  MRNUMBER = {2413671},
MRREVIEWER = {Leonid\ V.\ Kovalev},
}

@article {Armin24,
    AUTHOR = {Schikorra, Armin},
     TITLE = {Failure of {C}alder\'on-{Z}ygmund {E}stimates for the
              p-{L}aplace {E}quation},
   JOURNAL = {Ann. PDE},
  FJOURNAL = {Annals of PDE. Journal Dedicated to the Analysis of Problems
              from Physical Sciences},
    VOLUME = {12},
      YEAR = {2026},
    NUMBER = {1},
     PAGES = {Paper No. 13},
      ISSN = {2524-5317,2199-2576},
   MRCLASS = {35J92 (35B65)},
  MRNUMBER = {5054820},
       DOI = {10.1007/s40818-026-00239-1},
       URL = {https://doi.org/10.1007/s40818-026-00239-1},
}

@article{Meyer63,
     author = {Meyers, Norman G.},
     title = {An $L^p$-estimate for the gradient of solutions of second order elliptic divergence equations},
     journal = {Annali della Scuola Normale Superiore di Pisa - Scienze Fisiche e Matematiche},
     pages = {189--206},
     year = {1963},
     publisher = {Scuola normale superiore},
     volume = {Ser. 3, 17},
     number = {3},
     mrnumber = {159110},
     zbl = {0127.31904},
     language = {en},
     url = {https://www.numdam.org/item/ASNSP_1963_3_17_3_189_0/}
}

@article{diF96,
	title = {Lp {Estimates} for {Divergence} {Form} {Elliptic} {Equations} with {Discontinuous} {Coefficients}},
	volume = {10},
	journal = {Bollettino della Unione Matemàtica Italiana. Serie VII. A},
	author = {Di Fazio, Giuseppe},
	month = jan,
	year = {1996},
}

@article{BW04,
	title = {Elliptic equations with {BMO} coefficients in {Reifenberg} domains},
	volume = {57},
	issn = {1097-0312},
	url = {https://onlinelibrary.wiley.com/doi/abs/10.1002/cpa.20037},
	doi = {10.1002/cpa.20037},
	language = {en},
	number = {10},
	urldate = {2026-03-04},
	journal = {Communications on Pure and Applied Mathematics},
	author = {Byun, Sun-Sig and Wang, Lihe},
	year = {2004},
	note = {\_eprint: https://onlinelibrary.wiley.com/doi/pdf/10.1002/cpa.20037},
	pages = {1283--1310},
}

@article{BWZ07,
	title = {Nonlinear elliptic equations with {BMO} coefficients in {Reifenberg} domains},
	volume = {250},
	issn = {0022-1236},
	url = {https://www.sciencedirect.com/science/article/pii/S0022123607001802},
	doi = {10.1016/j.jfa.2007.04.021},
	number = {1},
	urldate = {2026-05-13},
	journal = {Journal of Functional Analysis},
	author = {Byun, Sun-Sig and Wang, Lihe and Zhou, Shulin},
	month = sep,
	year = {2007},
	pages = {167--196},
}

@article{CMT19,
  title={Gradient estimates for weak solutions of
 linear elliptic systems with singular-degenerate
 coefficients},
  author={Dat Cao and Tadele Mengesha and Tuoc Van Phan},
  journal={Nonlinear Dispersive Waves and Fluids},
  year={2019},
  url={https://api.semanticscholar.org/CorpusID:53632385}
}

@article{diFFZ14,
author = {Di Fazio, Giuseppe and Fanciullo, Maria and Pietro, Zamboni},
year = {2014},
month = {09},
pages = {907-917},
title = {Lp estimates for degenerate elliptic systems with VMO coefficients},
volume = {25},
journal = {St. Petersburg Mathematical Journal},
doi = {10.1090/S1061-0022-2014-01322-2}
}

@article{M72,
	title = {Weighted norm inequalities for the {Hardy} maximal function},
	volume = {165},
	issn = {0002-9947, 1088-6850},
	url = {https://www.ams.org/tran/1972-165-00/S0002-9947-1972-0293384-6/},
	doi = {10.1090/S0002-9947-1972-0293384-6},
	language = {en},
	urldate = {2024-03-26},
	journal = {Transactions of the American Mathematical Society},
	author = {Muckenhoupt, Benjamin},
	year = {1972},
	pages = {207--226},
}

@article{CF74,
	title = {Weighted norm inequalities for maximal functions and singular integrals},
	volume = {51},
	issn = {0039-3223, 1730-6337},
	url = {https://www.impan.pl/en/publishing-house/journals-and-series/studia-mathematica/all/51/3/100345/weighted-norm-inequalities-for-maximal-functions-and-singular-integrals},
	doi = {10.4064/sm-51-3-241-250},
	language = {en},
	urldate = {2020-12-08},
	journal = {Studia Mathematica},
	publisher = {Instytut Matematyczny Polskiej Akademii Nauk},
	author = {Coifman, R. and Fefferman, C.},
	year = {1974},
	pages = {241--250},
}

\end{document}